\documentclass[12pt]{article}
\usepackage{amsmath,amsfonts,amssymb,amsthm} \usepackage{epsfig}





\theoremstyle{plain}
\newtheorem{theorem}{Theorem}[section]

\newtheorem{lemma}[theorem]{Lemma}
\newtheorem{proposition}[theorem]{Proposition}

\theoremstyle{definition}

\newtheorem{remark}[theorem]{Remark}

\numberwithin{equation}{section}

\newcommand{\Z}{{\mathbb Z}}
\newcommand{\R}{{\mathbb R}}

\newcommand{\barint}{\mathop{\hbox{\vrule height3pt depth-2.7pt
width.65em}\hskip-1em \int}}

\DeclareMathOperator{\rad}{rad}

\DeclareMathOperator{\supp}{supp}

\DeclareMathOperator{\BMO}{BMO}

\begin{document}
\title{A characterization of $\BMO$ self-maps of a metric measure space}

\author{Juha Kinnunen \and Riikka Korte \and Niko Marola \and
  Nageswari Shanmugalingam \footnote{The research is supported by the Academy of Finland.}}

\maketitle

\begin{abstract} 
  This paper studies functions of bounded mean oscillation ($\BMO$) on metric
  spaces equipped with a doubling measure. The main result gives characterizations for
  mappings that preserve $\BMO$. 
   This extends the corresponding Euclidean results by Gotoh to metric measure spaces.
  The argument is based on a generalizations Uchiyama's
  construction of certain extremal $\BMO$-functions and John-Nirenberg's lemma. 
  \end{abstract}

\medskip

{\small \emph{Mathematics Subject Classification (2010)}: Primary
  30L99; Secondary 43A85.}

{\small \emph{Keywords}: Bounded mean oscillation, doubling condition, John--Nirenberg lemma, analysis on metric measure spaces.}

\section{Introduction}
Let $X$ be a complete metric space equipped with a
metric $d$ and a Borel regular outer measure $\mu$ satisfying the
doubling condition. 
A locally integrable function $f:X\rightarrow \R$ is of
bounded mean oscillation, denoted as $f\in \BMO(X)$, if
\[
\|f\|_*=\sup\barint_{B}|f-f_{B}|\,d\mu<\infty,
\]
where the supremum is taken over all balls $B\subset X$.
We discuss invariance properties of $\BMO$-functions. 
More precisely, we extend a characterization of Gotoh \cite{Gotoh01, Gotoh05} of
mappings that preserve $\BMO$ to the metric setting. 
A $\mu$-measurable map $F\colon X \to X$ is a $\BMO$-map if 
$F^{-1}(E)$ is a $\mu$-null set for each $\mu$-null set $E\subset X$, 
for every $f\in\BMO(X)$ the composed map $C_F(f)=f\circ F$ is in $\BMO(X)$.
The first condition guarantees the uniqueness of the $\BMO$-map. 
Moreover, the composition operator $C_F$ is a bounded operator from $\BMO(X)$ to $\BMO(X)$.

The class of $\BMO$-functions is used, for example,  in harmonic analysis, partial differential equations and quasiconformal mappings.
Indeed, the first invariance property for $\BMO$-functions was obtained by Reimann  \cite{Rei}, where he showed that a homeomorphism is 
a $\BMO$-map if and only if it is quasiconformal, provided the homeomorphism is assumed to be differentiable almost everywhere. Later Astala
showed in \cite{Ast} that the differentiability assumption is superfluous for a suitably localized result. 
The advantage of the approach by  Gotoh \cite{Gotoh01} is that it applies to general measurable functions and hence is a more suitable to
extensions to the metric setting.
The Euclidean theory for $\BMO$-functions is well understood, but not so much in a general metric measure space. 
For related metric space results we refer to \cite{Buckley, Kronz, Maasalo, MMNO} and also to \cite[Section 3.3]{Bjornsbook}.

We generalize the construction of certain extremal $\BMO$-functions by
Uchiyama~\cite{Uchiyama82} (see also \cite[Section 2]{GJ}) to doubling
spaces. The result is stated in Theorem~\ref{theorem:construction} and
it constitutes the first part of the present paper.
In the second part, we consider characterizations of $\BMO$-maps
between doubling spaces. Our main result is stated in
Theorem~\ref{thm:Gotoh}. The characterizations in
Theorem~\ref{thm:Gotoh} are along the lines of the ones due to
Gotoh~\cite{Gotoh01,Gotoh05}.

\section{Construction of certain $\BMO$-functions}
\label{sect:U}

Throughout the paper, $X$ is a complete metric space equipped with a
metric $d$ and a Borel regular outer measure $\mu$ satisfying the
doubling condition. An open ball
\[
B(x,r)=\{y\in X:d(y,x)<r\},\quad x\in X,\,r>0,
\]
is simply denoted by $B$, we write $\rad(B)$ for the radius of the
ball $B$, and $\lambda B=\{y\in X:d(y,x)<\lambda r\}$, $\lambda>0$, is
the ball with the same center, but the radius dilated by the factor
$\lambda$.

In this paper, the doubling condition means that there exists a
constant $c_{D}>1$ such that for all $x\in X$, $0<r<\infty$ and $y\in
X$ such that $B(x,2r)\cap B(y,r)\neq \emptyset$, we have
\[
\mu(B(x,2r))\leq c_{D}\mu(B(y,r)).
\]
Notice that this condition is usually required to hold only for $x=y$,
but if this standard doubling condition is valid with some uniform
constant $c_{\mu}$, then $\mu(B(x,2r))\leq \mu(B(y,8r))\leq
c_{\mu}^{3}\mu(B(y,r))$, i.e. our version of the standard doubling
condition is satisfied with $c_{D}=c_{\mu}^{3}$.
The standard doubling condition implies that if $B(x,R)\subset X$,
$y\in B(x,R)$, and $0<r\leq R<\infty$, then
\[
\frac{\mu(B(y,r))}{\mu(B(x,R))} \geq
c_\mu^{-2}\left(\frac{r}{R}\right)^{\log_2c_\mu}.
\]
We refer, for instance, to \cite[Lemma 3.3]{Bjornsbook}.

We recall that a locally integrable function $f:X\rightarrow \R$ has
bounded mean oscillation, denoted as $f\in \BMO(X)$, if
\[
\|f\|_*=\sup\barint_{B}|f-f_{B}|\,d\mu<\infty,
\]
where the supremum is taken over all balls $B\subset X$.
We will identify functions which only differ by a constant; we shall
call $\|f\|_*$ the $\BMO$-norm of $f$. Here both $f_B$
and the barred integral $\barint_{B}f\,d\mu$ denote the integral average of
$f$ over a ball $B$.

The following theorem is a metric space counterpart of a construction
of certain $\BMO$-functions in Uchiyama~\cite{Uchiyama82} and Garnett--Jones~\cite{GJ}.

\begin{theorem}\label{theorem:construction}
  Let $\lambda>1$ and let $E_{1},\ldots,E_{N}$, $N\ge2$, be $\mu$-measurable subsets of
  $X$ such that
\begin{equation}\label{eqn:minEj}
\min_{1\leq j\leq N}\frac{\mu(E_{j}\cap B)}{\mu(B)}\leq c_{D}^{-4\lambda}
\end{equation}
for any ball $B\subset X$.
Then there exist functions $\{f_{j}\}_{j=1}^{N}$ such that
\begin{equation}\label{eqn:sum1}
\sum_{j=1}^{N}f_{j}(x)=1,
\end{equation}
and for each $1\leq j\leq N$
\begin{equation}\label{eqn:leq1}
0\leq f_{j}(x)\leq 1,
\end{equation}
and
\begin{equation}\label{eqn:E0}
  f_{j}(x)=0\quad\text{$\mu$-almost everywhere on }E_{j},
\end{equation}
and moreover,
\begin{equation}\label{eqn:bmoF}
\|f_{j}\|_*\leq \frac{c_{1}}\lambda.
\end{equation}
Here  $c_{1}$ is a constant that only depends on $c_{D}$ and $N$.
Conversely, if there exists $\{f_{j}\}_{j=1}^{N}$ that satisfy
\eqref{eqn:sum1}--\eqref{eqn:E0} and
\[
\|f_{j}\|_*\leq \frac{c_{2}}\lambda
\]
holds with a sufficiently small constant $c_{2}$, only depending on
$c_{D}$ and $N$, for every $1\leq j\leq N$, then \eqref{eqn:minEj}
holds.
\end{theorem}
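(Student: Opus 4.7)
My plan is to prove the two implications separately, starting with the converse because it is a short consequence of the John--Nirenberg inequality. Suppose $\{f_j\}_{j=1}^N$ satisfies \eqref{eqn:sum1}--\eqref{eqn:E0} with $\|f_j\|_* \le c_2/\lambda$. Fix an arbitrary ball $B \subset X$; averaging \eqref{eqn:sum1} gives $\sum_{j=1}^N (f_j)_B = 1$, so some index $j^*$ (depending on $B$) satisfies $(f_{j^*})_B \ge 1/N$. Since $f_{j^*}$ vanishes $\mu$-almost everywhere on $E_{j^*}$, up to a null set $E_{j^*} \cap B \subseteq \{x \in B : |f_{j^*}(x) - (f_{j^*})_B| \ge 1/N\}$. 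Invoking the John--Nirenberg inequality on doubling metric spaces, I get constants $A,\beta > 0$ depending only on $c_D$ with
\[
\mu(\{x \in B : |f_{j^*}(x)-(f_{j^*})_B| > t\}) \le A \exp\bigl(-\beta t/\|f_{j^*}\|_*\bigr)\mu(B).
\]
Using this with $t$ slightly less than $1/N$, then letting $t \to 1/N$, together with $\|f_{j^*}\|_* \le c_2/\lambda$, yields $\mu(E_{j^*}\cap B)/\mu(B) \le A\exp(-\beta\lambda/(Nc_2))$. Choosing $c_2$ small enough (depending only on $c_D$, $N$, and the John--Nirenberg constants) forces this to be at most $c_D^{-4\lambda}$ for every $\lambda > 1$, which is \eqref{eqn:minEj}.

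For the forward direction, the plan is to carry out the metric-space analogue of Uchiyama's iterative construction. Using Christ's theorem I fix a system of dyadic-type cubes $\{\mathcal{Q}_k\}_{k \in \Z}$ in $X$, each $Q \in \mathcal{Q}_k$ sandwiched between concentric balls of radius $\asymp \delta^k$ for some $\delta \in (0,1)$. Hypothesis \eqref{eqn:minEj} translates by doubling into an analogue for cubes: for every $Q$ I select $j(Q)$ with $\mu(E_{j(Q)} \cap Q) \le C c_D^{-4\lambda}\mu(Q)$. I then build $f_j$ as the pointwise limit of an iteration $f_j^{(k)}$ starting from the uniform partition $f_j^{(0)} \equiv 1/N$: at each refinement step, for each $Q \in \mathcal{Q}_{k+1}$ I multiply $f_{j(Q)}^{(k)}$ by a fixed factor $(1-\theta) \in (0,1)$ on $E_{j(Q)} \cap Q$ and redistribute the removed mass equally among the other $f_{j'}^{(k)}$ on the same set, leaving every function unchanged on $Q \setminus E_{j(Q)}$. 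Properties \eqref{eqn:sum1} and \eqref{eqn:leq1} are preserved by design, and the infinite product of factors $(1-\theta)$ acting at each point of $E_j$ gives \eqref{eqn:E0}.

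The delicate step is the BMO bound \eqref{eqn:bmoF}. At scale $k$ the modification on a single cube $Q$ produces a mean oscillation on $Q$ of order $\mu(E_{j(Q)}\cap Q)/\mu(Q) \lesssim c_D^{-4\lambda}$. To estimate $\|f_j\|_*$ on an arbitrary ball $B$, I compare $B$ to the scale $k_0$ with $\delta^{k_0} \asymp \rad(B)$: the contributions from scales coarser than $k_0$ form an essentially constant function on $B$, while those from finer scales form a geometric series whose total (accounting for the shrinkage of the current value at each subsequent scale) is again dominated by a doubling-constant multiple of $c_D^{-4\lambda}$. Since the function $\lambda \mapsto \lambda\, c_D^{-4\lambda}$ is bounded on $(1,\infty)$ by a constant depending only on $c_D$, this yields \eqref{eqn:bmoF} with $c_1$ depending only on $c_D$ and $N$.

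The main obstacle will be the rigorous BMO estimate across scales: Christ's cubes form only an approximate tiling, so careful bookkeeping is needed to convert the per-cube oscillation estimate into a bound on $\barint_B|f_j-(f_j)_B|\dmu$ for an arbitrary ball $B$, using doubling to relate the measure of $B$ to the measures of the few cubes overlapping $B$ at each relevant scale. A secondary concern is maintaining $0 \le f_j^{(k)} \le 1$ throughout the iteration --- the mass redistributed to the remaining $N-1$ functions must never push any of them above $1$ --- and this is precisely what the generous factor $4\lambda$ in the exponent of $c_D^{-4\lambda}$ is designed to guarantee.
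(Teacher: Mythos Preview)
Your converse direction is correct and is essentially the paper's argument: pigeonhole to find $(f_{j^*})_B\ge 1/N$, use the vanishing on $E_{j^*}$, and apply John--Nirenberg.

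The forward direction, however, has a genuine gap in the vanishing property \eqref{eqn:E0}. In your iteration you choose, for each cube $Q$, a \emph{single} index $j(Q)$ for which $E_{j(Q)}$ has small density in $Q$, and you only shrink $f_{j(Q)}$ on $E_{j(Q)}\cap Q$. Now fix $j$ and take a Lebesgue density point $x$ of $E_j$. For all sufficiently small cubes $Q$ containing $x$ one has $\mu(E_j\cap Q)/\mu(Q)$ close to $1$, so the selection rule forces $j(Q)\neq j$; from that scale on, $f_j$ is never decreased at $x$ (indeed it may be \emph{increased} by the redistribution whenever $x\in E_{j(Q)}$). Hence the ``infinite product of factors $(1-\theta)$ acting at each point of $E_j$'' that you invoke simply does not occur at almost every point of $E_j$, and there is no reason for $f_j$ to vanish there.

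Your BMO sketch is also suspect. The per-scale contribution on a fixed ball is $\lesssim c_D^{-4\lambda}$, but there are infinitely many finer scales, and the ``shrinkage'' you allude to would give a geometric series only if the \emph{same} index were chosen at every scale---which it is not. A correct argument cannot yield $\|f_j\|_*\lesssim c_D^{-4\lambda}$; that would be an exponentially stronger conclusion than the $c_1/\lambda$ stated, and the mechanism that prevents it (the redistribution terms, which are not small) is exactly what your sketch leaves unquantified.

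The paper's construction is organized around a different idea. One works at amplitude $\lambda$ (so $\sum_j f_{j,h}=\lambda$) and imposes at every stage the ceiling
\[
f_{j,h}(x)\le g_j(B):=\log_{c_D}\frac{\mu(B)}{\mu(E_j\cap B)}\quad\text{for every }x\in B\in\mathcal B_h.
\]
This single inequality is what forces $f_{j,h}(x)\to 0$ at almost every $x\in E_j$, because $g_j(B)\to 0$ there by Lebesgue differentiation. The iteration subtracts Lipschitz bumps wherever $f_{j,h-1}$ threatens to exceed $g_j(B)$ on a ball $B\in\mathcal B_h$, and then redistributes the removed mass to an index $s(B)$ chosen so that its ceiling $g_{s(B)}(4B)\ge 4\lambda$ is comfortably high. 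The BMO bound $\|f_{j,h}\|_*\le c_1$ (a constant, before dividing by $\lambda$) is obtained by a level-set argument: one shows $\mu(\{x\in B:|f_{j,h}-\beta_j|>\alpha\})\le C\mu(B)c_D^{-\alpha/(N-1)}$ via a covering by balls in the ``bad'' collections $A_{j,k}$ and the definition of $g_j$. This is where the logarithmic nature of $g_j$ enters and why the final bound is $c_1/\lambda$ rather than anything exponential.
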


Before the proof of the theorem, we fix some notation and state few
lemmas that will be needed later.
Let $q$ be a large integer, depending only on $c_{D}$ and $N$, such
that
\begin{equation}\label{eqn:q}
  1+N c_{D}^{6}q\leq 2^{q}.
\end{equation}
For every $k\in\Z$, let $r_{k}=2^{-kq}$ and let $\mathcal D_{k}$ be a
maximal set of points such that $d(x,y)\geq \tfrac12r_{k}$ whenever
$x,y\in \mathcal D_{k}$. Let $\mathcal D=\bigcup_{k\in \Z}\mathcal
D_{k}$. Moreover, let
\[
\mathcal B_{k}=\{B(x,r_{k})\,:\, x\in \mathcal D_{k}\}.
\]
From the maximality of the set $\mathcal D_{k}$ it follows
that for every $k\in\mathbb Z$,
\[
X=\bigcup_{B\in \mathcal B_{k}}B.
\]
We say that a function $a\in C(X)$ is adapted to a ball $B=B(x,r)$, if
\[
\supp a\subset B(x,2r)
\quad
\text{and}
\quad
|a(x)-a(y)|\leq \frac{d(x,y)}r.
\]
For a ball $B$, we set
\begin{equation} \label{g}
g_{j}(B)=\log_{c_{D}}\frac{\mu(B)}{\mu(E_{j}\cap B)},\quad 1\leq j\leq N.
\end{equation}

Let us state the following simple lemma for the function $g_j$.

\begin{lemma}\label{lemma:g}
Let $k$  be a positive integer.
If $B_{1}\subset B_{2}$ and $c_{D}^{k}\mu(B_{1})\geq \mu(B_{2})$ for the balls $B_1$ and $B_2$ in $X$, then 
\[
g_{j}(B_{1})\geq g_{j}(B_{2})-k.
\]
\end{lemma}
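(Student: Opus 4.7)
The plan is to unpack the definition of $g_j$ and use two monotonicity facts: $\mu(E_j\cap\cdot)$ and $\mu(\cdot)$ both respect the inclusion $B_1\subset B_2$, while the hypothesis $\mu(B_2)\le c_D^k\mu(B_1)$ controls the reverse direction for $\mu$ itself.

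First I would handle the degenerate case. If $\mu(E_j\cap B_1)=0$, then by convention $g_j(B_1)=+\infty$ and the conclusion is immediate, so assume $\mu(E_j\cap B_1)>0$. From $B_1\subset B_2$ we get $E_j\cap B_1\subset E_j\cap B_2$, hence $\mu(E_j\cap B_1)\le \mu(E_j\cap B_2)$. Combined with the size comparison $\mu(B_1)\ge c_D^{-k}\mu(B_2)$, this yields
\[
\frac{\mu(B_1)}{\mu(E_j\cap B_1)}\;\ge\;\frac{\mu(B_1)}{\mu(E_j\cap B_2)}\;\ge\;c_D^{-k}\,\frac{\mu(B_2)}{\mu(E_j\cap B_2)}.
\]
Taking $\log_{c_D}$ of both sides gives exactly $g_j(B_1)\ge g_j(B_2)-k$, which is the desired inequality.

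There is no real obstacle here — this is a purely arithmetic consequence of the definition \eqref{g} and the monotonicity of measure under inclusion. The lemma is a bookkeeping statement about how much $g_j$ can drop when one enlarges a ball by a factor whose measure increase is at most $c_D^k$.
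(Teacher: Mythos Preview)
Your argument is correct and matches the paper's proof essentially line for line: both simply combine $\mu(E_j\cap B_1)\le\mu(E_j\cap B_2)$ with $\mu(B_1)\ge c_D^{-k}\mu(B_2)$ and take $\log_{c_D}$. The only addition is your explicit treatment of the degenerate case $\mu(E_j\cap B_1)=0$, which the paper leaves implicit.
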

\begin{proof} Clearly
\[
\begin{split}
g_{j}(B_{1})&=\log_{c_{D}}\frac{\mu(B_{1})}{\mu(B_{1}\cap E_{j})} \\
&\geq\log_{c_{D}}\frac{c_{D}^{-k}\mu(B_{2})}{\mu(B_{2}\cap E_{j})}=g_{j}(B_{2})-k.\qedhere
\end{split}
\]
\end{proof}

The next result is well known for the experts, but we recall it here.

\begin{lemma}\label{lemma:coifman-weiss}
Let $f\in\BMO(X)$. Then
\[
\tfrac12\|f\|_*\leq\sup\left|\int_{X}fg\,d\mu\right|\leq\|f\|_*,
\]
where the supremum is taken over all functions $g$ for which there
exists a ball $B$ such that
\[
\supp g\subset B,\quad \|g\|_{\infty}\leq
\frac1{\mu(B)},\quad\text{and}\quad\int_{X}g\, d\mu=0.
\]
Conversely, if $f$ is a locally integrable function on $X$ and the
supremum above is finite, then $f\in\BMO(X)$ with the above norm
estimate.
\end{lemma}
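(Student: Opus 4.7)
The plan is to prove the upper estimate $\sup_g|\int_X fg\,d\mu|\le\|f\|_*$ first. Given an admissible $g$ supported in a ball $B$ with $\int g\,d\mu=0$ and $\|g\|_\infty\le 1/\mu(B)$, the mean-zero condition allows me to subtract any constant from $f$, so $\int_X fg\,d\mu=\int_B(f-f_B)g\,d\mu$. Estimating pointwise by the sup-norm bound gives
\[
\left|\int_X fg\,d\mu\right|\le\frac{1}{\mu(B)}\int_B|f-f_B|\,d\mu\le\|f\|_*,
\]
and taking the supremum over admissible $g$ yields the right-hand inequality.

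For the reverse inequality $\tfrac12\|f\|_*\le\sup_g|\int fg\,d\mu|$, I would fix an arbitrary ball $B$ and build a near-extremal test function by splitting $B$ according to the sign of $f-f_B$. Set $B^+=\{x\in B:f(x)\ge f_B\}$ and $B^-=B\setminus B^+$; if either set has zero measure then $\int_B|f-f_B|\,d\mu=0$ and there is nothing to show, so assume both have positive measure. Since $\int_B(f-f_B)\,d\mu=0$,
\[
\int_{B^+}(f-f_B)\,d\mu=\int_{B^-}|f-f_B|\,d\mu=\tfrac12\int_B|f-f_B|\,d\mu.
\]
I take $g=\alpha\chi_{B^+}-\beta\chi_{B^-}$ with $\alpha,\beta>0$ chosen so that $\alpha\mu(B^+)=\beta\mu(B^-)$ (giving $\int g\,d\mu=0$) and the larger of $\alpha,\beta$ equal to $1/\mu(B)$; this forces $\|g\|_\infty\le 1/\mu(B)$ and $\alpha+\beta\ge 1/\mu(B)$. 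A direct calculation using the identity above gives
\[
\int_X fg\,d\mu=\int(f-f_B)g\,d\mu=(\alpha+\beta)\cdot\tfrac12\int_B|f-f_B|\,d\mu\ge\tfrac12\barint_B|f-f_B|\,d\mu,
\]
and the left-hand inequality follows by taking the supremum over $B$.

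For the converse statement, if the supremum is a finite number $M$ and $f$ is locally integrable, the construction above applied in any ball $B$ produces an admissible $g$ whose integral against $f$ dominates $\tfrac12\barint_B|f-f_B|\,d\mu$, whence $\|f\|_*\le 2M<\infty$. The main obstacle is the careful choice of $\alpha$ and $\beta$ to ensure the mean-zero and sup constraints hold simultaneously while keeping $\alpha+\beta\gtrsim 1/\mu(B)$; no metric or doubling features enter the argument, which is why the result holds in this generality along the classical Coifman--Weiss lines.
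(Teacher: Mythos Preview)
Your argument is correct and follows essentially the same route as the paper. For the upper bound the two proofs are identical. For the lower bound, the paper picks $h=\sgn(f-f_B)$ (so that $\int_B(f-f_B)h\,d\mu=\int_B|f-f_B|\,d\mu$), then sets $g=\frac{(h-h_B)\chi_B}{2\mu(B)}$; unpacking this, one gets $g=\frac{\mu(B^-)}{\mu(B)^2}\chi_{B^+}-\frac{\mu(B^+)}{\mu(B)^2}\chi_{B^-}$, i.e.\ exactly a function of your form $\alpha\chi_{B^+}-\beta\chi_{B^-}$ with $\alpha\mu(B^+)=\beta\mu(B^-)$ and $\alpha+\beta=1/\mu(B)$. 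Your normalization $\max(\alpha,\beta)=1/\mu(B)$ is a harmless variant that only makes $\alpha+\beta$ larger, so the same inequality drops out. The converse statement is handled identically in both.
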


\begin{proof}
First notice that for any $g$ as above, we have
\[
\left|\int_{X}fg \,d\mu\right|
=\left|\int_{X}(f-f_{B})g\,d\mu\right|\leq\barint_{B}|f-f_{B}|\,d\mu\leq
\|f\|_*.
\]
This gives the upper bound.

To see the lower bound, let $\varepsilon>0$ and let $B$ be a ball such
that
\[
\|f\|_*\leq \barint_{B}|f-f_{B}|\,d\mu+\varepsilon.
\]
Let $h\in L^\infty(B)$ with $\|h\|_{L^\infty(B)}\le1$ be a function
for which
\begin{equation} \label{eq:lemma}
\int_{B}|f-f_{B}|\,d\mu
=\int_{B}(f-f_{B})h\,d\mu.
\end{equation}
Since $\int_{B}(f-f_{B})\,d\mu=0$, we have
\begin{equation} \label{eq:coifman-weiss1}
\int_{B}|f-f_{B}|\,d\mu
=\int_{B}(f-f_{B})(h-h_B)\,d\mu.
\end{equation}
Define
\[
g=\frac{(h-h_B)\chi_B}{2\mu(B)}.
\]
Then 
\[
\supp g\subset B, \quad \|g\|_{L^\infty(B)}\le \frac1{\mu(B)} \quad \text{and}\quad \int_Xg\,d\mu=0. 
\]
Moreover
\begin{align} \label{eq:coifman-weiss2}
\int_Xfg\,d\mu & =\frac1{2\mu(B)}\int_Bf(h-h_B)\,d\mu \nonumber \\
& =\frac1{2\mu(B)}\int_B(f-f_B)(h-h_B)\,d\mu.
\end{align}
By combining the equation \eqref{eq:coifman-weiss1} and
\eqref{eq:coifman-weiss2} we conclude that
\[
\int_{B}|f-f_{B}|\,d\mu
=2\mu(B)\int_Xfg\,d\mu
\]
and
\[
\int_{X}fg\,d\mu = \frac12\barint_{B}|f-f_{B}|\,d\mu\geq\frac12(\|f\|_*-\varepsilon).
\]
The claim follows by passing $\varepsilon\to0$.

The equation \eqref{eq:lemma} together with the above inequalities
also indicates that the finiteness of
$\sup\left|\int_{X}fg\,d\mu\right|$ implies $f\in\BMO(X)$.
\end{proof}

The proof of the metric space version of the following John-Nirenberg
lemma can be found for example in Theorem 3.15 in
\cite{Bjornsbook}. See also \cite{Buckley} and \cite{MMNO}.

\begin{lemma}\label{lemma:jn}
Let $B\subset X$ be a ball and $f\in\BMO(5B)$. Then for every $\lambda>0$
\[
\mu(\{x\in B\,:\,|f(x)-f_{B}|>\lambda\})\leq
2\mu(B)\exp\left(-\frac{A\lambda}{\|f\|_*}\right).
\]
The positive constant $A$ depends only on the doubling constant $c_{D}$.
\end{lemma}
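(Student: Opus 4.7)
The plan is to prove the estimate by iterating a Calder\'on--Zygmund stopping-time decomposition, the standard route to John--Nirenberg, carried out here with balls in place of dyadic cubes (as is customary in doubling metric spaces). After dividing by $\|f\|_*$ and subtracting $f_B$, I may assume $\|f\|_* = 1$ and $f_B = 0$, so the target reduces to
\[
\mu(\{x \in B : |f(x)| > \lambda\}) \leq 2\mu(B)\exp(-A\lambda).
\]

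The key single stopping step goes as follows. Fix a large constant $\alpha = \alpha(c_D) > 1$. At each $x \in B$ with $|f(x)| > \alpha$, I would select a stopping ball $B_x \ni x$ of maximal radius among balls contained in $5B$ whose average of $|f|$ exceeds $\alpha$; the existence of such a maximum uses $\int_{5B}|f|\,d\mu < \infty$, which is where the hypothesis $f \in \BMO(5B)$ (rather than only $\BMO(B)$) enters. A Vitali $5r$-covering extracts a disjoint subfamily $\{B_i\}_i$ whose fivefold dilations cover the selected stopping balls. Maximality combined with doubling gives $\barint_{B_i}|f|\,d\mu \leq c_0\alpha$ for some $c_0 = c_0(c_D)$, hence $|f_{B_i}| \leq c_0\alpha$; the Lebesgue differentiation theorem gives $|f| \leq \alpha$ almost everywhere on $B \setminus \bigcup_i 5B_i$; and the Chebyshev-type bound
\[
\sum_i \mu(B_i) \leq \alpha^{-1}\int_B|f|\,d\mu \leq \alpha^{-1}\mu(B)
\]
controls the total measure of the bad set. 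Since $f - f_{B_i}$ has BMO norm at most $1$ on an enlargement of $B_i$ and exceeds $k\alpha$ only where $|f|$ exceeds $(k + c_0)\alpha$, the same decomposition can then be repeated inside each $B_i$.

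Iterating this $n$ times produces a family $\{B^{(n)}_{\vec\jmath}\}$ of total measure at most $\alpha^{-n}\mu(B)$, outside whose fivefold union $|f| \leq nc_0\alpha$ almost everywhere, with doubling bounding the measure of the fivefold union itself by a constant multiple of $\alpha^{-n}\mu(B)$. For a given $\lambda$, taking $n$ to be the integer part of $\lambda/(c_0\alpha)$ converts geometric decay in $n$ into exponential decay in $\lambda$, yielding the claimed bound with $A = (\log\alpha)/(c_0\alpha)$ depending only on $c_D$; the factor $2$ on the right-hand side absorbs the doubling constants as well as the low-$\lambda$ regime where $\mu(\{\cdot\}) \leq \mu(B)$ is the trivial bound. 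The main obstacle is executing the stopping-time decomposition cleanly without dyadic cubes: overlaps are controlled only via Vitali's lemma, doubling must be invoked at every stage to pass between averages on $B_i$ and on $5B_i$, and the enlargement factor $5$ in the hypothesis $f \in \BMO(5B)$ is precisely what keeps the stopping balls inside a region of BMO control, so that the iteration can proceed indefinitely.
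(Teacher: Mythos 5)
The paper does not actually prove this lemma: it is quoted from the literature (Theorem 3.15 in Bj\"orn--Bj\"orn, see also Buckley and Mateu--Mattila--Nicolau--Orobitg), and your Calder\'on--Zygmund stopping-time iteration with balls and the $5r$-covering lemma is precisely the argument those references carry out, so your approach matches the (cited) proof and is essentially correct. The only bookkeeping points to watch are that the level set is covered by the dilated balls $5B_i$, so the iteration must be performed inside $5B_i$ rather than $B_i$ (with $|f_{5B_i}|\leq c\alpha$ still following from maximality and doubling), and that the Chebyshev bound picks up a doubling constant because the stopping balls lie in $5B$ rather than $B$ --- both harmless once $\alpha$ is chosen large compared with these constants.
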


We are ready for the proof of the main result of this chapter.

\begin{proof}[Proof of Theorem~\ref{theorem:construction}]
  The necessity part of the theorem is an immediate consequence of
  Lemma \ref{lemma:jn}. Fix $\lambda > 1$ and let $B$ be a ball. By
  \eqref{eqn:sum1}, there exists $j_{0}$ such that
\[
(f_{j_{0}})_{B}\geq \frac1{N}.
\] 
Thus, by Lemma~\ref{lemma:jn} and \eqref{eqn:E0}, we have
\[
\begin{split}
\frac{\mu(B\cap E_{j_{0}})}{\mu(B)} 
&\leq \frac{\mu(\{x\in B\,:\, |f_{j_{0}}(x)-(f_{j_{0}})_{B}|\geq 1/N\})}{\mu(B)}\\
&\leq 2 e^{-A/(N \|f\|_*)}\leq 2\exp\left(-\frac{A\lambda}{N c_{2}}\right)\leq c_{D}^{-4\lambda},
\end{split}
\]
if $c_{2}$ is chosen to be small enough. This completes the proof of
the necessity part of Theorem \ref{theorem:construction}.

Then we consider the sufficiency.
By \eqref{eqn:minEj}, we have
\[
\mu\left(\bigcap_{j=1}^{N}E_{j}\right)=0.
\]
Thus, if $\lambda>1$ is smaller than a given number, then the functions
\[
f_{j}=\frac{\chi_{E_{j}^{c}}}{\sum_{k=1}^{N}\chi_{E_{k}^{c}}}, \quad1\leq j\leq N,
\]
satisfy the desired properties (we denote the characteristic function
of a set $A$ by $\chi_A$). So we may assume that $\lambda$ is large
enough.

First, we assume that 
\begin{equation}\label{eqn:unitball}
E_{1},\ldots,E_{N}\subset B_{0}
\end{equation}
for some $B_{0}\in \mathcal B_{0}$.
We will inductively construct the sequences of $\BMO$ functions
 $\{f_{j,h}\}_{h=1}^{\infty}$, $1\leq j\leq N$, such that
\begin{equation}\label{eqn:sumLambda}
\sum_{j=1}^{N}f_{j,h}(x)=\lambda,
\end{equation}
\begin{equation}\label{eqn:lessLambda}
0\leq f_{j,h}(x)\leq \lambda,
\end{equation}
\begin{equation}\label{eqn:gCond}
f_{j,h}(x)\leq g_{j}(B) \text{ for every } x\in B,\text{ if  }B\in \mathcal B_{h},
\end{equation}
and
\begin{equation}\label{eqn:bmoC1}
\|f_{j,h}\|_*\leq c_{1}.
\end{equation}
If the functions $f_{j,h}$ above have been constructed, there exists a sequence $1\leq h_{1}<h_{2}<\ldots$ such that
$\{f_{j,h_{k}}\}_{k=1}^{\infty}$ converge weak* in $L^{\infty}$ as $k\to\infty$, since $\|f_{j,h}\|_\infty\leq \lambda$ by \eqref{eqn:lessLambda}.
We set
\[
f_{j}=\text{weak}^*-\lim_{k\rightarrow\infty}\frac{f_{j,h_{k}}}\lambda, \quad1\leq j\leq N.
\]
Then \eqref{eqn:sum1} and \eqref{eqn:leq1} follow from
\eqref{eqn:sumLambda} and \eqref{eqn:lessLambda}. Let $g$ be as in
Lemma \ref{lemma:coifman-weiss}. Then
\[
\left|\int f_{j}g\,d\mu\right|
=\frac1\lambda\left|\lim_{k\rightarrow \infty}\int f_{j,h_{k}}g \,d\mu\right|
\leq \frac1\lambda\limsup_{k\rightarrow\infty}\|f_{j,h_{k}}\|_*
\leq \frac{c_{1}}\lambda.
\]
Thus \eqref{eqn:bmoF} with constant $2c_{1}$ follows from Lemma
\ref{lemma:coifman-weiss}. Since, by Lebesgue's theorem, 
\[
\lim_{r\to0}\sup_{\substack{B\ni x \\ \rad(B)\le r}} g_{j}(B)=0
\]
for $\mu$-almost every $x\in E_{j}$, we have by \eqref{eqn:gCond}
\[
\lim_{h\rightarrow 0}f_{j,h}(x)=0
\]
for $\mu$-almost every $x\in E_{j}$. Thus \eqref{eqn:E0} follows. Hence
$\{f_{j}\}_{j=1}^N$ are the desired functions.

To remove the restriction \eqref{eqn:unitball}, we take balls
$B_{p}\in\mathcal B_{-p}$, $p=1,2,\ldots$, such that $B_{p-1}\subset
B_{p}$ for every $p$, and we can construct $f_{j,p}$ such that all
other conditions are as for $B_{0}$, except that
\[
f_{j,p}=0\quad\text{on }E_{j}\cap B_{p}.
\]
Then there exists a sequence $1\leq p_{1}<p_{2}\ldots$ such that $\{f_{j,p_{k}}\}_{k=1}^{\infty}$ converge weak* in $L^{\infty}$. Then
\[
f_{j}=\text{weak}^{*}-\lim_{k\rightarrow\infty}f_{j,p_{k}},\quad1\leq j\leq N,
\]
are the desired functions.

Thus, to complete the proof Theorem~\ref{theorem:construction} we
shall construct a sequence of functions that satisfy the conditions
\eqref{eqn:sumLambda}--\eqref{eqn:bmoC1}. The proof is organized as
follows. In Lemma~\ref{lemma:easycond}, we will construct the sequence
$\{f_{j,h}\}_{h=0}^{\infty}$, $1\leq j\leq N$, and show that these
functions satisfy the conditions
\eqref{eqn:sumLambda}--\eqref{eqn:gCond}. And finally, in
Lemma~\ref{lemma:bmo}, we show that the condition
\eqref{eqn:bmoC1} is valid for the functions.

\begin{lemma}\label{lemma:easycond}
  Let $E_{1},\ldots,E_{N}$ satisfy \eqref{eqn:minEj} and
  \eqref{eqn:unitball}. Then there exist $\{f_{j,h}\}$ and
  $A_{j,h}\subset \mathcal B_{h}$ having the properties
  \eqref{eqn:sumLambda}--\eqref{eqn:gCond} and satisfying the
  following conditions
\begin{equation}\label{eqn:Lip}
|f_{j,h}(x)-f_{j,h}(y)|\leq 2^{(h+1)q} d(x,y),
\end{equation}
\begin{equation}\label{eqn:Ajh}
A_{j,h}=\{B\in\mathcal B_{h}:\, \sup_{B} f_{j,h-1}>g_{j}(B)\},
\end{equation}
\begin{equation}\label{eqn:cd3q}
f_{j,h}(x)\geq f_{j,h-1}(x)-c_{D}^{3}q,
\end{equation}
and
\begin{equation}\label{eqn:incr}
f_{j,h}(x)\geq f_{j,h-1}(x)\ \textrm{ for }\ x\notin \bigcup_{B\in A_{j,h}}2B.
\end{equation}
\end{lemma}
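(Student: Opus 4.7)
The plan is to build $\{f_{j,h}\}$ by induction on $h$. Because \eqref{eqn:Ajh} couples $f_{j,h}$ to $f_{j,h-1}$, the induction needs an initial step; I would begin at a sufficiently coarse level $h_{0}<0$, setting $f_{j,h_{0}}\equiv\lambda/N$. For $|h_{0}|$ large enough relative to $\lambda$, one has $g_{j}(B)\ge\lambda$ for every $B\in\mathcal B_{h_{0}}$ that meets $B_{0}$ (since $E_{j}\subset B_{0}$ forces $\mu(E_{j}\cap B)\le\mu(B_{0})$, while $\mu(B)\ge c_{D}^{-O(|h_{0}|q)}\mu(B_{0})$ by the lower measure bound recorded before Lemma~\ref{lemma:coifman-weiss}), and $g_{j}(B)=+\infty$ if $B$ does not meet $B_{0}$; all of \eqref{eqn:sumLambda}--\eqref{eqn:gCond} and \eqref{eqn:Lip} then hold trivially at $h_{0}$, while \eqref{eqn:cd3q}, \eqref{eqn:incr} are vacuous.

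For the inductive step from $h-1$ to $h$, define $A_{j,h}$ by \eqref{eqn:Ajh}, and for each $B\in\mathcal B_{h}$ choose a \emph{safe index} $j^{*}(B)$ with $g_{j^{*}(B)}(B)\ge 4\lambda$, whose existence follows from \eqref{eqn:minEj}. The central a priori estimate is that for every $B\in A_{j,h}$,
\[
\sup_{B} f_{j,h-1}-g_{j}(B)\le c_{D}^{3}q.
\]
I would prove this by picking, for each $y\in B$, a parent ball $B''\in\mathcal B_{h-1}$ with $y\in B''$; since $r_{h-1}+2r_{h}\le 2r_{h-1}$ one has $B\subset 2B''$, and doubling gives $\mu(2B'')\le c_{D}^{O(q)}\mu(B)$, so Lemma~\ref{lemma:g} yields $g_{j}(B)\ge g_{j}(B'')-O(q)\ge f_{j,h-1}(y)-O(q)$ via the inductive bound \eqref{eqn:gCond} at level $h-1$. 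Passing to the supremum over $y\in B$ gives the excess bound.

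With these ingredients I would remove each excess and redistribute it onto the safe index. Using Lipschitz cutoffs $\varphi_{B}$ adapted to $B\in\mathcal B_{h}$ with $\supp\varphi_{B}\subset 2B$, $\varphi_{B}\equiv 1$ on $B$ and $\Lip\varphi_{B}\le 2^{hq+1}$, I set
\[
f_{j,h}=f_{j,h-1}-\sum_{B\in A_{j,h}}\Delta_{j,B}\,\varphi_{B}+\sum_{\substack{B\in\mathcal B_{h}\\ j^{*}(B)=j}}\Bigl(\sum_{k:\,B\in A_{k,h}}\Delta_{k,B}\Bigr)\varphi_{B},
\]
where $\Delta_{j,B}=\sup_{B}f_{j,h-1}-g_{j}(B)$ for $B\in A_{j,h}$. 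By construction \eqref{eqn:sumLambda}, \eqref{eqn:cd3q} and \eqref{eqn:incr} are immediate; the Lipschitz bound \eqref{eqn:Lip} follows from combining the excess bound $\Delta_{j,B}\le c_{D}^{3}q$ with $\Lip\varphi_{B}\le 2^{hq+1}$ and the bounded overlap of $\{2B:B\in\mathcal B_{h}\}$, the product being absorbed into $2^{(h+1)q}$ by virtue of \eqref{eqn:q}.

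The hardest part is the simultaneous verification of \eqref{eqn:lessLambda} and \eqref{eqn:gCond} at level $h$ in the presence of the redistribution. On each $B\in A_{j,h}$ the reduction exactly equalises $\sup_{B}f_{j,h}$ with $g_{j}(B)$, so \eqref{eqn:gCond} holds at the reduced index. The delicate issue is that additions accumulate on overlapping balls: an addition to a safe index $j^{*}(B)$ on $2B$ may overlap an addition attributed to a neighbouring $B'\in\mathcal B_{h}$, possibly pushing $f_{j^{*},h}$ above either $\lambda$ or $g_{j^{*}}(B')$. This is controlled by noting that each individual addition is at most $N c_{D}^{3}q$, the overlap count of $\{2B:B\in\mathcal B_{h}\}$ is bounded in terms of $c_{D}$, and by \eqref{eqn:q} the total accumulated addition at any point is dominated by the safety margin $4\lambda-\lambda=3\lambda$ available at the safe index; the specific choice of $q$ in \eqref{eqn:q} is precisely what closes this accounting.
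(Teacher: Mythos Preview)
Your inductive scheme---subtract on ``bad'' balls $A_{j,h}$ and redistribute the mass to a safe index---is the same outline as the paper's, but there is a genuine gap in the subtraction step that breaks \eqref{eqn:lessLambda} and, through it, the rest of the verification.

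The problem is that your corrections $\Delta_{j,B}\,\varphi_{B}$ overlap. At a point $x$ lying in several $2B$ with $B\in A_{j,h}$, the total amount removed from $f_{j,h-1}(x)$ is $\sum_{B}\Delta_{j,B}\varphi_{B}(x)$, which can be of order $c_{D}^{3}\cdot c_{D}^{3}q$ even though $f_{j,h-1}(x)$ may be arbitrarily small (indeed, the whole point of the construction is that $f_{j,h-1}\to 0$ on $E_{j}$). Nothing in your definition prevents $f_{j,h}(x)<0$. The paper avoids this by a recursive truncation: with $A_{j,k}=\{B_{m}\}$ it sets $a_{B_{1}}=\min\{q\,b_{B_{1}},f_{j,k-1}\}$ and $a_{B_{m}}=\min\{q\,b_{B_{m}},\,f_{j,k-1}-\sum_{n<m}a_{B_{n}}\}$, so that $\widetilde f_{j,k}=f_{j,k-1}-\sum_{m}a_{B_{m}}=\max\{f_{j,k-1}-q\sum_{m}b_{B_{m}},0\}\ge 0$. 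The bumps $a_{B_{m}}$ (not fixed multiples of $\varphi_{B}$) are then the objects redistributed to the safe index; this is what makes the whole step close. Once each $\widetilde f_{j,k}\ge 0$ and the additions $w_{j,k}\ge 0$, the identity $\sum_{j}f_{j,k}=\lambda$ forces $f_{j,k}\le\lambda$, which is exactly what the paper uses to verify \eqref{eqn:gCond} at the safe index (namely $f_{j,k}\le\lambda\le g_{j}(B)$). Your ``safety margin $3\lambda$'' argument cannot substitute for this, because without $f_{j,h}\ge 0$ you have no a priori upper bound $f_{j,h}\le\lambda$, and the additions are bounded only by a constant depending on $q$, not by $3\lambda$.

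Two smaller points. First, your claim that on $B\in A_{j,h}$ the reduction ``exactly equalises $\sup_{B}f_{j,h}$ with $g_{j}(B)$'' is false as written: neighbouring subtractions and any additions from balls $B'$ with $j^{*}(B')=j$ both perturb the value on $B$, so there is no exact equality. The paper instead subtracts a \emph{fixed} amount $q$ on each bad ball and uses Lemma~\ref{lemma:g} (comparing $B$ with a parent $\widetilde B\in\mathcal B_{h-1}$) to show that this fixed drop already enforces $\widetilde f_{j,k}\le g_{j}(B)$. Second, your safe index should be chosen with $g_{j^{*}(B)}(4B)\ge 4\lambda$ rather than $g_{j^{*}(B)}(B)\ge 4\lambda$: when checking \eqref{eqn:gCond} on a neighbouring $B'\in\mathcal B_{h}$ with $B'\cap 2B\neq\emptyset$, one needs $B'\subset 4B$ and then Lemma~\ref{lemma:g} to get $g_{j}(B')\ge g_{j}(4B)-2\ge\lambda$; knowing only $g_{j}(B)\ge 4\lambda$ does not control $g_{j}(B')$.
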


\begin{proof}
By \eqref{eqn:minEj}, we have
\[
\max_{1\leq j\leq N}g_{j}(B_{0})\geq 4\lambda.
\]
Set 
\[
s(B_{0})=\min\{j\,:\, 1\leq j\leq N, g_{j}(4B_{0})\geq 4\lambda\},
\]
\[
f_{s(B_{0}),0}=\lambda,
\quad\text{and}\quad
f_{j,0}=0\text{ for } j\neq s(B_{0}).
\]
Assume now that the functions $f_{1,k-1},\ldots,f_{N,k-1}$ have been
defined and satisfy the conditions
\eqref{eqn:sumLambda}--\eqref{eqn:gCond}, \eqref{eqn:Lip},
\eqref{eqn:cd3q} and \eqref{eqn:incr}. Define $A_{j,k}$ by
\eqref{eqn:Ajh}.  For any ball $B$, let $b_{B}$ denote a function that
is adapted to $B$, $0\leq b_{B}\leq 1$ and $b_{B}=1$ on $B$.  Let
$A_{j,k}=\{B_{m}\}_{m=1}^{p}$. Set
$a_{B_{1}}=\min\{qb_{B_{1}},f_{j,k-1}\}$ and
\[
a_{B_{m}}=\min\left\{qb_{B_{m}},f_{j,k-1}-\sum_{n=1}^{m-1}a_{B_{n}}\right\} \text{ for } m=2,\ldots,p.
\]
Since the supports of $\{b_{B_{m}}\}$ overlap at most $c_{D}^{3}$
times, the functions $c_{D}^{-3}q^{-1}a_{B_{m}}$ are adapted to
$B_{m}$.  Set
\[
\widetilde f_{j,k}=f_{j,k-1}-\sum_{B\in A_{j,k}}a_{B}=f_{j,k-1}-v_{j,k}.
\]
Since 
\[
\widetilde f_{j,k}=\max\left\{f_{j,k-1}-\sum_{B\in A_{j,k}}qb_{B},0\right\},
\]
we see that $\{\widetilde f_{j,k}\}$ satisfy \eqref{eqn:lessLambda}, \eqref{eqn:cd3q} and  \eqref{eqn:incr}.

If  $B\in A_{j,k}$ and $x\in B$, then  by Lemma \ref{lemma:g}
\[
\widetilde f_{j,k}(x)\leq \max\{f_{j,k-1}(x)-q,0\}
\leq\max\{g_{j}(\widetilde B)-q,0\}
\leq g_{j}(B),
\]
for every $\widetilde B \in \mathcal B_{k-1}$ such that $B\subset
\widetilde B$.

If $B\in \mathcal B_{k}\setminus A_{j,k}$ and $x\in B$, then
\[
\widetilde f_{j,k}(x)\leq f_{j,k-1}(x)\leq g_{j}(B)
\]  
by the definition of $A_{j,k}$. So $\{\widetilde f_{j,k}\}$ satisfies
\eqref{eqn:gCond}. These functions do not satisfy the property
\eqref{eqn:sumLambda}, and hence we shall modify the functions
further. We set
\[
f_{j,k}=\widetilde f_{j,k}+\sum_{\substack{B\in \bigcup_{m=1}^{N}A_{m,k} \\ s(B)=j}}a_{B}
=\widetilde f_{j,k} +w_{j,k}.
\]
The modified sequence $\{ f_{j,k}\}$ satisfies
\eqref{eqn:sumLambda}. Also the conditions \eqref{eqn:lessLambda},
\eqref{eqn:cd3q}, and \eqref{eqn:incr} are met since $a_{B}\geq 0$.

Let us next look at the condition \eqref{eqn:gCond}. If $B\in\mathcal
B_{k}$ and $w_{j,k}=0$ on $B$, then
\[
f_{j,k}=\widetilde f_{j,k}\leq g_{j}(B)\quad\text{on }B,
\]   
since $\widetilde f_{j,k}$ satisfies \eqref{eqn:gCond}. 
If $B\in \mathcal B_{k}$ and $w_{j,k}\neq 0$ on $B$, then, by the definition of $w_{j,k}$, there exists a ball $\widetilde B\in\mathcal B_{k}$ such that
\[
B\cap 2\widetilde B\neq \emptyset\quad\text{and}\quad g_{j}(4\widetilde B)\geq 4\lambda.
\] 
Then $B\subset 4\widetilde B$. By Lemma \ref{lemma:g},
\[
g_{j}(B)\geq g_{j}(4\widetilde B)-2\geq \lambda.
\]
So by \eqref{eqn:lessLambda}, we have
\[
f_{j,k}(x)\leq \lambda \leq g_{j}(B)
\]
and consequently \eqref{eqn:gCond} holds.

Let us show that the condition \eqref{eqn:Lip} holds. If $x,y\in
\widetilde B$ and $\widetilde B\in\mathcal B_{k}$, then
\begin{equation}\label{eqn:vwLip}
\begin{split}
|(-v_{j,k}(x)+w_{j,k}(x))&-(-v_{j,k}(y)+w_{j,k}(y))|\\
&\leq \sum_{B\in \bigcup_{m=1}^{N}A_{m,k} }|a_{B}(x)-a_{B}(y)|
\end{split}
\end{equation}
Since the supports of $\{ a_{B}\}_{B\in \bigcup_{m}A_{m,k}}$ overlap at most $Nc_{D}^{3}$ times, \eqref{eqn:vwLip} is dominated by
\[
Nc_{D}^{3} \cdot c_{D}^{3}q\cdot \frac{d(x,y)}{r_{k}}=Nc_{D}^{6}q 2^{qk}d(x,y).
\]
From this we conclude that
\[
\begin{split}
|f_{j,k}(x)-f_{j,k}(y)|&\leq |f_{j,k-1}(x)-f_{j,k-1}(y)|+ Nc_{D}^{6}q 2^{qk}d(x,y)\\
&\leq (1+Nc_{D}^{6}q)2^{kq}d(x,y)
\leq 2^{(k+1)q}d(x,y),
\end{split}
\]
where we used \eqref{eqn:Lip} for $f_{j,k-1}$, and also the inequality
\eqref{eqn:q}.
\end{proof}

\begin{lemma}\label{lemma:f}
\[
f_{j,h}(x)\leq g_{j}(B)- \frac1{3}\log_{2}\frac{r}{r_{h}} + 8\cdot2^{q}+ 6
\]
for every $x\in B=B(y,r)$ for any $B$ such that $r\leq 4r_{h}$.
\end{lemma}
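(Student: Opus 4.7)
The plan is to combine three ingredients: a pigeonhole argument at scale $r_h$ that locates a ball of $\mathcal B_h$ in which $E_j$ is almost as dense as in $B$; the pointwise inequality \eqref{eqn:gCond} applied at a point of that ball; and the Lipschitz estimate \eqref{eqn:Lip} used to transfer the bound back to $x$.

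First I would apply pigeonhole at scale $r_h$. Since $\mathcal B_h$ covers $X$, the collection $\{B(z,r_h):z\in\mathcal D_h,\ B(z,r_h)\cap B\neq\emptyset\}$ covers $E_j\cap B$. The $r_h/2$-separation of $\mathcal D_h$ together with doubling applied to $B(y,6r_h)$ bounds the number $N'$ of balls in this collection by a constant depending only on $c_D$ (pack the disjoint balls $B(z,r_h/4)$ inside $B(y,6r_h)$). Hence there is $z^*\in\mathcal D_h$ with $B(z^*,r_h)\cap B\neq\emptyset$ and $\mu(E_j\cap B(z^*,r_h))\ge \mu(E_j\cap B)/N'$.

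Next I would convert this into a bound on $g_j(B(z^*,r_h))$. From the definition of $g_j$,
\[
g_j(B(z^*,r_h))\leq g_j(B)+\log_{c_D}N'+\log_{c_D}\frac{\mu(B(z^*,r_h))}{\mu(B)}.
\]
Using $B(z^*,r_h)\subset B(y,r+2r_h)$ and the strong doubling $\mu(B(y,R))/\mu(B(y,r))\le c_\mu^2(R/r)^{\log_2 c_\mu}$, combined with the identity $\log_{c_D}c_\mu=1/3$ coming from $c_D=c_\mu^3$---which is what produces the characteristic factor $1/3$ in the logarithmic correction---a case analysis on $r\ge r_h$ versus $r<r_h$ gives
\[
g_j(B(z^*,r_h))\leq g_j(B)-\tfrac13\log_2(r/r_h)+C_0,
\]
for an absolute constant $C_0\le 6$ depending only on $c_D$.

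Finally I pick $y'\in B(z^*,r_h)\cap B$, so that \eqref{eqn:gCond} at level $h$ gives $f_{j,h}(y')\le g_j(B(z^*,r_h))$. Since $x,y'\in B$ with $r\le 4r_h$, $d(x,y')<2r\le 8r_h$, and \eqref{eqn:Lip} yields
\[
|f_{j,h}(x)-f_{j,h}(y')|\leq 2^{(h+1)q}\cdot 8r_h=8\cdot 2^q.
\]
Combining the three steps produces the claimed inequality. The main technical obstacle is the constant bookkeeping in the second step: one must verify that the pigeonhole slack $\log_{c_D}N'$, the multiplicative constant from strong doubling, and the slack from the two-case analysis on the range $r\in(0,4r_h]$ together fit into the single constant $C_0\le 6$. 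The $1/3$ coefficient is forced by $\log_{c_D}c_\mu=1/3$, and the additive $8\cdot 2^q$ arises from the Lipschitz constant $2^{(h+1)q}$ times the diameter bound $8r_h$ of $B$.
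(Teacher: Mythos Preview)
Your proposal is correct and follows essentially the same route as the paper: cover $B$ by at most $c_D^3$ balls of $\mathcal B_h$, use \eqref{eqn:gCond} on one of them, transfer by the Lipschitz bound \eqref{eqn:Lip} (this produces the $8\cdot 2^q$), and compare measures via doubling and $c_D=c_\mu^3$ to obtain the $\tfrac13\log_2(r/r_h)$ term with additive loss $\le 6$. The only cosmetic difference is that the paper takes $\delta=\min_i g_j(B_i)$ and bounds $g_j(B)$ from below in terms of $\delta$, whereas you pigeonhole on $\mu(E_j\cap B_i)$ and bound $g_j(B(z^*,r_h))$ from above in terms of $g_j(B)$; these are two phrasings of the same estimate.
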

\begin{proof}
  There are at most $c_{D}^{3}$ balls in $B_{1},\ldots,B_{k}$ with the
  centers in $\mathcal D_{h}$ such that $B_{i}\cap B\neq \emptyset$.
  Let
\[
\delta=\min_{1\leq i\leq k}g_{j}(B_{i})=g_{j}(B_{i_{0}}).
\]
By \eqref{eqn:gCond}
\[
\inf_{x\in B}f_{j,h}(x)\leq \delta,
\]
and by \eqref{eqn:Lip} we have
\[
f_{j,h}(x)\leq \delta+2^{(h+1)q}2r\leq \delta + 8\cdot 2^{q}
\]
whenever $x\in B$.

On the other hand,
\[
\begin{split}
g_{j}(B)&=\log_{c_{D}}\frac{\mu(B)}{\mu(B\cap E_{j})} \\
&\geq \log_{c_{D}}\frac{\mu(B)}{\sum_{i}\mu(B_{i}\cap E_{j})} \\
&\geq  \log_{c_{D}}\frac{\mu(B)}{c_{D}^{3}\max_{i}\{\mu(B_{i}\cap E_{j})\}} \\
&=\log_{c_{D}}\frac{\mu(B)}{\mu(B_{i_{0}})}+
\log_{c_{D}}\frac{ \mu(B_{i_{0}}) }{ \mu(B_{i_{0}}\cap E_{j})}
+\log_{c_{D}}\frac{1}{c_{D}^{3}}\\
&\geq \log_{c_{D}}\frac{\mu(B)}{\mu(B_{i_{0}})}+\delta-3\\
&\geq \frac1{3}\log_{2}\frac{r}{r_{h}}+\delta-6.
\end{split}
\]
The desired result follows from the two previous estimates.
\end{proof}

We finish to proof of Theorem~\ref{theorem:construction} by proving
the following lemma.
\begin{lemma}\label{lemma:bmo}
 $\|f_{j,h}\|_*\leq c_{1}$.
\end{lemma}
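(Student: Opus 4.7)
The plan is to apply the Coifman--Weiss duality of Lemma~\ref{lemma:coifman-weiss} and bound $\left|\int_X f_{j,h} g\,d\mu\right|$ uniformly over test functions $g$ supported in a ball $B = B(y,r)$, of mean zero, and with $\|g\|_\infty\leq 1/\mu(B)$. The argument splits into regimes according to how $r$ compares to $r_h$.

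For small balls, $r \leq r_h$, the Lipschitz bound \eqref{eqn:Lip} shows that $|f_{j,h}(x) - f_{j,h}(y)| \leq 2^{(h+1)q}\cdot 2r \leq 2^{q+1}$ for $x\in B$. Using the mean-zero property of $g$ to replace $f_{j,h}$ by $f_{j,h} - f_{j,h}(y)$ inside the integral, and bounding by $\|g\|_\infty \cdot \mu(B) \cdot \operatorname{osc}_B(f_{j,h})$, yields $\left|\int f_{j,h} g\,d\mu\right| \leq 2^{q+1}$, which depends only on $c_D$ and $N$.

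For large balls, $r > r_h$, I would use the pointwise control from Lemma~\ref{lemma:f} together with the fact that the scale-$r_h$ quantity $x \mapsto g_j(B(x,r_h))$ has $\BMO$-norm bounded independently of $h$; this is the metric-space analogue of $\log(1/M\chi_{E_j}) \in \BMO$ and follows from the doubling property and Lemma~\ref{lemma:g}. After covering $B$ by a controlled number of balls $\widetilde B$ of radius comparable to $r_h$, the estimate $f_{j,h} \leq g_j(\widetilde B) + C$ on each such $\widetilde B$ (from Lemma~\ref{lemma:f}), combined with $f_{j,h} \geq 0$ and the $\BMO$ bound for $g_j$, will yield $\frac{1}{\mu(B)}\int_B |f_{j,h} - c_B|\,d\mu \leq C$ for a suitable constant $c_B$.

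The main obstacle is that Lemma~\ref{lemma:f} provides only a one-sided (upper) bound on $f_{j,h}$, so one cannot directly conclude a two-sided $\BMO$ estimate from it. To close this gap I expect to use the telescoping decomposition $f_{j,h} = f_{j,k_B} + \sum_{k=k_B+1}^h (w_{j,k} - v_{j,k})$, where $k_B$ is chosen with $r_{k_B} \sim r$, together with a Carleson-type packing estimate on the ``bad'' sets $\bigcup_{B' \in A_{j,k}} 2B'$ intersected with $B$ at each scale. Such a packing bound is essentially latent in Lemma~\ref{lemma:f}, and should follow by summing the Lipschitz contributions of the atoms $a_{B'}$ scale by scale, using the doubling property and the defining inequality \eqref{eqn:q} for $q$.
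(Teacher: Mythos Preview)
Your small-ball case ($r\le r_h$) is fine and matches the paper. The large-ball case, however, has a genuine gap: the two devices you propose do not do the job, and you are missing the actual mechanism the paper uses.

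First, the step ``cover $B$ by a controlled number of balls $\widetilde B$ of radius comparable to $r_h$'' fails when $r\gg r_h$: the number of such balls grows like $c_D^{hq}$ and is certainly not controlled. Second, the claim that $x\mapsto g_j(B(x,r_h))$ lies in $\BMO$ with norm independent of $h$ is not the metric analogue of $\log M\chi_{E_j}\in\BMO$; you are averaging at a \emph{single} scale, not taking a maximal function, and there is no reason for this quantity to have a uniform $\BMO$ bound. Third, even granting such a bound, a one-sided comparison $f_{j,h}\le g_j(\widetilde B)+C$ together with $f_{j,h}\ge 0$ does not yield a two-sided oscillation estimate. Finally, the ``Carleson-type packing estimate'' you invoke is not latent in Lemma~\ref{lemma:f}; that lemma is purely a pointwise upper bound and contains no packing information.

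What the paper actually does for $r_h<r\le 1$ is a distributional argument. Pick $n$ with $r_{n+1}<r\le r_n$ and set $\beta_j=\barint_B f_{j,n}\,d\mu$. One estimates separately the measures of
\[
G(B,j,\alpha)=\{x\in B: f_{j,h}(x)<\beta_j-\alpha\}\quad\text{and}\quad H(B,j,\alpha)=\{x\in B: f_{j,h}(x)>\beta_j+\alpha\}.
\]
For $G$: by \eqref{eqn:incr}, any point where $f_{j,h}$ has dropped below $\beta_j-\alpha$ must lie in $2\widetilde B$ for some $\widetilde B\in A_{j,k}$ with $n<k\le h$, and \eqref{eqn:cd3q}, \eqref{eqn:Lip} force $g_j(\widetilde B)<\beta_j-\alpha+C$. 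A $5$-covering by such $\widetilde B$'s, the identity $\mu(\widetilde B)=c_D^{\,g_j(\widetilde B)}\mu(E_j\cap\widetilde B)$, and the bound \eqref{eqn:beta} on $\beta_j$ (this is where Lemma~\ref{lemma:f} is used) give $\mu(G(B,j,\alpha))\le C\mu(B)c_D^{-\alpha}$. For $H$: the constraint \eqref{eqn:sumLambda}, i.e.\ $\sum_m f_{m,h}=\lambda=\sum_m\beta_m$, shows $H(B,j,\alpha)\subset\bigcup_{m\ne j}G(B,m,\alpha/(N-1))$, reducing the upper-tail to lower-tails of the other indices. The exponential decay of the distribution function then integrates to \eqref{eqn:bmoC}. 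The use of \eqref{eqn:sumLambda} to trade an upper deviation of $f_{j,h}$ for lower deviations of the $f_{m,h}$, $m\ne j$, is the idea you are missing entirely.
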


\begin{proof}
Let $B=B(x,r)$ be any ball. If $r\leq 2^{-hq}$ then, by \eqref{eqn:Lip}, we have
\begin{equation}\label{eqn:bmoCsmall}
\inf_{c\in \mathbb R}\barint_{B}|f_{j,h}-c|d\mu\leq 2^{q}.
\end{equation}
If $0\leq n<h$ and $2^{-(n+1)q}<r\leq2^{-nq}$, let
\[
\beta_{j}=\barint_{B}f_{j,n}\,d\mu.
\]
Notice that by Lemma~\ref{lemma:f},
\begin{equation}\label{eqn:beta}
  \beta_{j}\leq g_{j}(4B)+\frac1{3}q+8\cdot 2^{q}+6.
\end{equation}
We will show that
\begin{equation}\label{eqn:bmoC}
\barint_{B}|f_{j,h}-\beta_{j}|\,d\mu\leq C.
\end{equation}
Let
\begin{equation}\label{eqn:GH}
\begin{split}
&\{x\in B\,:\, |f_{j,h}(x)-\beta_{j}|\geq\alpha\}\\
&=\{x\in B\,:\, f_{j,h}(x)<\beta_{j}-\alpha\}\cup\{x\in B\,:\, f_{j,h}(x)>\beta_{j}+\alpha\}\\
&=G(B,j,\alpha)\cup H(B,j,\alpha).
\end{split}
\end{equation}
First, we estimate $\mu(G(B,j,\alpha))$. Let $\alpha>2^{q+1}$. Note that $f_{j,n}(x)>\beta_{j}-2^{q+1}$ on $B$ by \eqref{eqn:Lip}. So if $x\in G(B,j,\alpha)$ then, by \eqref{eqn:incr}, there exists $\widetilde B\in A_{j,k}$, $n<k\leq h$, such that 
$x\in 2\widetilde B$ and $f_{j,k}(x)<\beta_{j}-\alpha$.
So by \eqref{eqn:cd3q}, we have
\[
f_{j,k-1}(x)<\beta_{j}-\alpha+c_{D}^{3}q,
\]
and by \eqref{eqn:Lip}
\[
f_{j,k-1}(y)<\beta_{j}-\alpha+c_{D}^{3}q+3
\]
for every $y\in \widetilde B$.
Thus, by the definition of $A_{j,k}$, we obtain
\[
g_{j}(\widetilde B)<\beta_{j}-\alpha+c_{D}^{3}q+3.
\]
By the above, we can use the standard $5$-covering theorem
(\cite[Lemma 1.7]{Bjornsbook}) and take disjoint balls
$\{B_{m}\}\subset \bigcup_{n<k\leq h}A_{j,k}$ such that
\[
B_{m}\subset 4B,\quad G(B,j,\alpha)\subset \bigcup_{m}5B_{m},
\]
and 
\begin{equation}\label{eqn:gjBm}
g_{j}(B_{m})<\beta_{j}-\alpha+c_{D}^{3}q+3.
\end{equation}
Thus
\begin{equation}\label{eqn:muG}
\begin{split}
\mu(G(B,j,\alpha))&\leq c_{D}^{3}\sum_{m}\mu(B_{m})=c_{D}^{3}\sum_{m}\mu(E_{j}\cap B_{m})c_{D}^{g_{j}(B_{m})}\\
&\leq C c_{D}^{\beta_{j}-\alpha}\sum_{m}\mu(E_{j}\cap B_{m})\\
&\leq C c_{D}^{g_{j}(4B)-\alpha}\sum_{m}\mu(E_{j}\cap B_{m})\\
&\leq C c_{D}^{g_{j}(4B)-\alpha}\mu(E_{j}\cap 4B)\leq C \mu(B) c_{D}^{-\alpha}.
\end{split}
\end{equation}
Here we used first \eqref{g}, then \eqref{eqn:gjBm}, \eqref{eqn:beta}
and finally \eqref{g} again.

Let us then estimate the measure $\mu(H(B,j,\alpha))$. Let
$\alpha>(N-1)2^{q+1}$. Note that $\sum_{m=1}^{N}\beta_{m}=\lambda$ by
\eqref{eqn:sumLambda}.  So if $x\in H(B,j,\alpha)$, then
\[
\begin{split}
\sum_{1\leq m\leq N,\, m\neq j}f_{m,h}(x)&=\lambda-f_{j,h}(x)
=\sum_{m=1}^{N}\beta_{m}-f_{j,h}(x) \\
&=\left(\sum_{1\leq m\leq N,\, m\neq j}\beta_{m}\right)-\left(f_{j,h}(x)-\beta_{j}\right)\\
&< \left(\sum_{1\leq m\leq N,\, m\neq j}\beta_{m} \right)-\alpha.
\end{split}
\]
Thus
\[
 \sum_{\substack{1\leq m\leq N \\ m\neq j}}(\beta_{m} -f_{m,h}(x))>\alpha.
\]
So
\[
x\in\bigcup_{\substack{1\leq m\leq N \\ m\neq j}}G(B,m,\alpha/(N-1)),
\]
and consequently
\[
H(B,j,\alpha)\subset\bigcup_{\substack{1\leq m\leq N \\ m\neq j}}G(B,m,\alpha/(N-1)).
\]

By \eqref{eqn:muG}, we have
\begin{equation}\label{eqn:muH}
\mu(H(B,j,\alpha))\leq C(N-1)\mu(B)c_{D}^{-\alpha/(N-1)}.
\end{equation}
Thus, if $2^{-hq}\leq r\leq 1$, then \eqref{eqn:bmoC} follows from
\eqref{eqn:muG} and \eqref{eqn:muH}.  If $r>1$, then put
$\beta_{s(B_{0})}=\lambda$ and $\beta_{j}=0$ for $j\neq
s(B_{0})$. Then \eqref{eqn:bmoC} follows from the same argument. Thus
Lemma \ref{lemma:bmo} follows from \eqref{eqn:bmoCsmall} and
\eqref{eqn:bmoC}.
\end{proof}
The proof of Theorem~\ref{theorem:construction} is now complete.
\end{proof}

\section{Characterizations of $\BMO$-maps}
\label{sect:G}

We say that a $\mu$-measurable map $F\colon X \to X$ is a {\it
  $\BMO$-map} if 
\begin{itemize}
\item[(I)] $F^{-1}(E)$ is a $\mu$-null set for each $\mu$-null
set $E\subset X$, 
\item[(II)] for every $f\in\BMO(X)$ the composed map $C_F(f)=f\circ F$ is
  in $\BMO(X)$.
\end{itemize}

We shall prove a metric space generalization of a theorem due to
Gotoh~\cite[Theorem 3.1]{Gotoh01} which characterizes $\BMO$-maps
between doubling metric measure spaces. In the proof we apply
Uchiyama's construction proved in Section~\ref{sect:U}.
The condition \eqref{eq:i} has a similar flavor as the conditions in \cite{GK74} and \cite{KoMaSha12} 
related to invariance properties of quasiconformal mappings. 

\begin{theorem} \label{thm:Gotoh} Suppose that $F\colon X \to X$ is
  $\mu$-measurable. Then the following conditions are equivalent:
\begin{itemize}

\item[(i)] There exist positive finite constants $K$ and $\alpha$ such
  that for an arbitrary pair of $\mu$-measurable subsets $E_1,\, E_2$
  of $X$ we have
  \begin{equation} \label{eq:i}
    \sup_{B}\min_{k=1,2}\frac{\mu(F^{-1}(E_k)\cap B)}{\mu(B)}\leq
    K\left(\sup_{B}\min_{k=1,2}\frac{\mu(E_k\cap
        B)}{\mu(B)}\right)^\alpha,
\end{equation}
where the suprema are taken over all balls $B$ in $X$;

\item[(ii)] There exist constants $0<\gamma<1/4$ and $\lambda>0$ such
  that for an arbitrary pair of $\mu$-measurable subsets $E_1,\, E_2$
  of $X$ satisfying
\[
\sup_{B}\min_{k=1,2}\frac{\mu(E_k\cap B)}{\mu(B)}<\lambda,
\]
we have
\[
\sup_{B}\min_{k=1,2}\frac{\mu(F^{-1}(E_k)\cap B)}{\mu(B)}<\gamma,
\]
where the suprema are taken over all balls $B$ in $X$;

\item[(iii)] $F$ is a $\BMO$-map with the operator norm of $C_F$ bounded by $CK/\alpha$,
where $C$ depends only on the doubling constant.

\end{itemize}
\end{theorem}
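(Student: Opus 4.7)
The plan is to establish the cycle $(i)\Rightarrow(ii)\Rightarrow(iii)\Rightarrow(i)$, tracking the quantitative bound $\|C_F\|\le CK/\alpha$ in~(iii) along the implication $(i)\Rightarrow(iii)$. The step $(i)\Rightarrow(ii)$ is immediate: choose $\lambda>0$ so small that $K\lambda^{\alpha}<1/4$ and set $\gamma=K\lambda^{\alpha}$.

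For $(ii)\Rightarrow(iii)$, condition~(I) follows by applying (ii) with $E_1=E_2=E$ to a $\mu$-null set $E$, whose hypothesis is trivial; the conclusion gives $\sup_{B}\mu(F^{-1}(E)\cap B)/\mu(B)<\gamma<1/4$, which together with Lebesgue differentiation for the doubling measure $\mu$ forces $\mu(F^{-1}(E))=0$. For condition~(II), fix $f\in\BMO(X)$ and put $g=f\circ F$. For $c_1>c_2$, the John--Nirenberg lemma (Lemma~\ref{lemma:jn}) applied on an arbitrary ball $B'$ yields
\[
\min\bigl(\mu(\{f>c_1\}\cap B'),\,\mu(\{f<c_2\}\cap B')\bigr)\le 2\mu(B')\exp\!\Bigl(-\frac{A(c_1-c_2)}{2\|f\|_{*}}\Bigr),
\]
since whichever of $c_1,c_2$ lies farther from $f_{B'}$ produces a large excursion of $f$ from its mean. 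Choosing $c_1-c_2$ to be a suitable multiple of $\|f\|_{*}$ makes the right-hand side at most $\lambda$, so by (ii) and the identities $F^{-1}\{f>c_1\}=\{g>c_1\}$, $F^{-1}\{f<c_2\}=\{g<c_2\}$,
\[
\sup_B\frac{\min\bigl(\mu(\{g>c_1\}\cap B),\,\mu(\{g<c_2\}\cap B)\bigr)}{\mu(B)}<\gamma.
\]

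Fix such a threshold $M$ for $c_1-c_2$. A monotonicity argument on $t\mapsto\mu(\{g>t\}\cap B)/\mu(B)$ now produces, for each ball $B$, values $\alpha(B)\le\beta(B)$ with $\beta(B)-\alpha(B)\le M$ such that $\mu(\{g>\beta(B)\}\cap B)\le\gamma\mu(B)$ and $\mu(\{g<\alpha(B)\}\cap B)\le\gamma\mu(B)$; otherwise one could find $c_1-c_2>M$ for which both tails have measure $\ge\gamma\mu(B)$, contradicting the previous display. Setting $c_B=(\alpha(B)+\beta(B))/2$ gives
\[
\mu\bigl(\{x\in B:\,|g(x)-c_B|>M/2\}\bigr)\le 2\gamma\mu(B)<\tfrac12\mu(B),
\]
because $\gamma<1/4$; this is the Strömberg local-oscillation characterization of $\BMO$ on a doubling space and yields $\|g\|_{*}\le CM$ with $C=C(c_D)$. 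Starting directly from~(i) rather than~(ii) and tracking the logarithmic constant, one has $M\lesssim K\|f\|_{*}/\alpha$, which delivers the claimed operator-norm bound. The main obstacle is this Strömberg step, because the hypothesis only controls the \emph{minimum} of the two one-sided tails of $g$ on each ball, whereas $\BMO$ requires a centered, two-sided bound; the median/monotonicity device above is what bridges the gap.

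For $(iii)\Rightarrow(i)$ I invoke Theorem~\ref{theorem:construction} with $N=2$. Given measurable $E_1,E_2\subset X$, set $\sigma=\sup_B\min_{k}\mu(E_k\cap B)/\mu(B)$; in the nontrivial regime write $\sigma=c_D^{-4\lambda_0}$ with $\lambda_0>1$. Theorem~\ref{theorem:construction} produces $f_1,f_2$ with $\sum_j f_j\equiv 1$, $0\le f_j\le 1$, $f_j=0$ $\mu$-a.e.\ on $E_j$, and $\|f_j\|_{*}\le c_1/\lambda_0$. The compositions $g_j=f_j\circ F$ inherit the three pointwise properties with $E_j$ replaced by $F^{-1}(E_j)$ and satisfy $\|g_j\|_{*}\le\|C_F\|c_1/\lambda_0$ by~(iii). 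Applying the necessity half of Theorem~\ref{theorem:construction} to $g_1,g_2$ with effective parameter $\lambda'=c_2\lambda_0/(\|C_F\|c_1)$ then gives
\[
\sup_B\min_{k}\frac{\mu(F^{-1}(E_k)\cap B)}{\mu(B)}\le c_D^{-4\lambda'}=\sigma^{c_2/(\|C_F\|c_1)},
\]
which is~(i) with exponent $\alpha=c_2/(\|C_F\|c_1)$ and an absolute constant $K$ chosen to absorb the trivial range $\lambda_0\le 1$.
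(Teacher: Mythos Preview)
Your proof is correct and follows essentially the same route as the paper: the cycle $(i)\Rightarrow(ii)\Rightarrow(iii)\Rightarrow(i)$, with John--Nirenberg plus a Str\"omberg-type median argument for $(ii)\Rightarrow(iii)$, and Theorem~\ref{theorem:construction} for $(iii)\Rightarrow(i)$. The only cosmetic differences are that the paper obtains the quantitative bound $\|C_F\|\le CK/\alpha$ via a separate converse John--Nirenberg lemma (Lemma~\ref{lemma:JN2conv}, based on Lemma~\ref{lemma:Gotoh}) rather than through the Str\"omberg route, and in $(iii)\Rightarrow(i)$ it applies John--Nirenberg (Lemma~\ref{lemma:jn}) directly to the $g_j$ instead of invoking the necessity half of Theorem~\ref{theorem:construction}---which is itself proved by exactly that John--Nirenberg argument.
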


The condition (i) readily implies the condition (ii), and hence to
show the equivalence of conditions (i)--(iii), it is enough to prove
implications (i)$\Rightarrow$(iii), (ii)$\Rightarrow$(iii) and
(iii)$\Rightarrow$(i), in Propositions~\ref{prop:i-iii},
\ref{prop:ii-iii}, and \ref{prop:iii-i}, respectively. The Uchiyama
construction of $\BMO$ functions, presented in Section~\ref{sect:U},
is used in the proof of Proposition~\ref{prop:iii-i}.
For the proof of the bound for the operator norm, see Proposition \ref{prop:normbound}.

\begin{remark} \label{rmk:conditions} Let us comment on the condition
  (i).
\begin{itemize}

\item[(1)] Setting $E_1=E_2=X$ in \eqref{eq:i} it can be seen that $K\geq 1$.

\item[(2)] If \eqref{eq:i} is valid for some positive $\alpha_0$ it
  clearly holds for all $0<\alpha<\alpha_0$. And moreover, since the
  condition \eqref{eq:i} is interesting mainly with small values of
  the exponent $\alpha$, we shall assume, without loss of generality,
  that $\alpha\leq 1$.

\end{itemize}
\end{remark}

We shall next prove several lemmas on $\BMO$ functions.
 
\begin{lemma} \label{lemma:JN2} Let $f\in\BMO(X)$. Then 
\begin{align*}
  & \min\{\mu(\{x\in B\colon f(x)\geq t\}),\,\mu(\{x\in B\colon f(x)\leq s\}) \} \\
  & \qquad \quad \leq 2\mu(B)\exp\left(-C\frac{t-s}{\|f\|_*}\right)
\end{align*}
for every $-\infty<s\leq t<\infty$, where $C$ is a positive
constant depending on the doubling constant $c_D$.
\end{lemma}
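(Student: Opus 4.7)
The plan is to deduce this two-sided distributional estimate directly from the one-sided John--Nirenberg lemma (Lemma~\ref{lemma:jn}) by splitting cases according to the location of the mean $f_B$ relative to the midpoint of the interval $[s,t]$.

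First, I would set $m=(s+t)/2$ and observe the dichotomy: either $f_B\le m$ or $f_B>m$. In the first case, if $f(x)\ge t$ for some $x\in B$, then $f(x)-f_B\ge t-m=(t-s)/2$, so
\[
\{x\in B:f(x)\ge t\}\subset\{x\in B:|f(x)-f_B|\ge (t-s)/2\}.
\]
Applying Lemma~\ref{lemma:jn} with $\lambda=(t-s)/2$ bounds the right-hand side by $2\mu(B)\exp\bigl(-A(t-s)/(2\|f\|_*)\bigr)$, which controls the minimum. In the second case, if $f(x)\le s$ then $f_B-f(x)>m-s=(t-s)/2$, so
\[
\{x\in B:f(x)\le s\}\subset\{x\in B:|f(x)-f_B|\ge (t-s)/2\},
\]
and again Lemma~\ref{lemma:jn} yields the same bound on this set, which is the minimum in this case.

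In either case, the minimum of the two measures on the left-hand side is dominated by $2\mu(B)\exp\bigl(-A(t-s)/(2\|f\|_*)\bigr)$, giving the claim with $C=A/2$, where $A$ is the constant from Lemma~\ref{lemma:jn} (so $C$ depends only on $c_D$). If $t=s$ the statement is trivial since the exponential is $\le 1$ and each of the two measures is at most $\mu(B)\le 2\mu(B)$.

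There is no real obstacle here: the only subtlety is being careful that Lemma~\ref{lemma:jn}, as stated, requires $f\in\BMO(5B)$, but since we are assuming $f\in\BMO(X)$ this is automatic, and the $\BMO$ norm on $5B$ is bounded by $\|f\|_*$. The argument is essentially a one-line case split feeding into the one-sided distributional estimate already proved.
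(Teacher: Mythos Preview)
Your proof is correct and follows essentially the same approach as the paper: split into cases according to whether $f_B$ lies below or above the midpoint $(s+t)/2$, and in each case use the inclusion of the relevant level set into $\{x\in B:|f(x)-f_B|\ge (t-s)/2\}$ together with Lemma~\ref{lemma:jn} to obtain the bound with $C=A/2$. The paper handles one case and invokes symmetry for the other, whereas you write out both; your remarks on the $t=s$ case and the $\BMO(5B)$ hypothesis are correct and not explicitly mentioned in the paper.
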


\begin{proof} By symmetry, we may assume that $f_B \leq (s+t)/2$. Then
  Lemma~\ref{lemma:jn} implies that
\begin{align*}
  \mu(\{x\in B\colon f(x)\geq t\}) & \leq \mu\left(\left\{x\in B\colon |f(x)-f_B|\geq \frac{t-s}{2}\right\}\right) \\
  & \leq 2\mu(B)\exp\left(-\frac{A(t-s)}{2\|f\|_*}\right).
\end{align*}
If $f_B \geq (s+t)/2$, we get a similar estimate for $\mu(\{x\in B\colon f(x)\leq s\})$.
\end{proof}

A converse of the statement in Lemma~\ref{lemma:JN2} is presented in
the following.

\begin{lemma} \label{lemma:JN2conv} Let $f\colon X\to\R$ be a
  $\mu$-measurable function with $|f|<\infty$ $\mu$-almost everywhere
  in $X$. Assume there exist positive constants $C_1,\,C_2$ such that
  for every ball $B$ in $X$ we have
\begin{align*}
  & \min\{\mu(\{x\in B\colon f(x)\geq t\}),\,\mu(\{x\in B\colon f(x)\leq s\}) \} \\
  & \qquad \quad \leq C_1\mu(B)\exp\left(-C_2(t-s)\right)
\end{align*}
for every $-\infty<s\leq t<\infty$. Then $f\in\BMO(X)$ and
\[
\|f\|_*\leq 4(C_1+1)C_2^{-1}\exp(2C_2).
\]
\end{lemma}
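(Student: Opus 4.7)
The plan is, for each ball $B$, to estimate $\barint_{B} |f - m_{B}|\,d\mu$ for a carefully chosen \emph{median} value $m_B$, then to pass to $f_B$ via the elementary inequality $\barint_{B} |f - f_B|\,d\mu \leq 2 \barint_{B} |f - m_B|\,d\mu$, which holds since $|m_B - f_B| = |\barint_B (m_B-f)\,d\mu| \leq \barint_B |f - m_B|\,d\mu$.

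Fix a ball $B$. Because $|f|<\infty$ $\mu$-almost everywhere, the right-continuous, non-decreasing function $t\mapsto \mu(\{f\leq t\}\cap B)$ increases from $0$ to $\mu(B)$, so I can select a finite $m_B\in\R$ with both
\[
\mu(\{f\geq m_B\}\cap B)\geq \mu(B)/2 \quad\text{and}\quad \mu(\{f\leq m_B\}\cap B)\geq \mu(B)/2.
\]
Applying the hypothesis with $s = m_B$ and $t = m_B + \alpha$ for $\alpha>0$ gives
\[
\min\bigl(\mu(\{f\geq m_B+\alpha\}\cap B),\, \mu(\{f\leq m_B\}\cap B)\bigr) \leq C_1\mu(B)e^{-C_2\alpha}.
\]
Once $\alpha > C_2^{-1}\log(2C_1)$, the right-hand side drops below $\mu(B)/2$, so the minimum must be realized by the first entry; a symmetric application with $t = m_B$ and $s = m_B-\alpha$ yields the matching estimate for the lower tail.

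This produces a two-sided tail bound $\mu(\{|f-m_B|>\alpha\}\cap B)\leq 2C_1\mu(B)e^{-C_2\alpha}$ valid above a threshold $\alpha_*$. I would then invoke the layer-cake formula
\[
\int_B |f-m_B|\,d\mu = \int_0^\infty \mu(\{|f-m_B|>\alpha\}\cap B)\,d\alpha,
\]
split at $\alpha_* = \max\{2,\,C_2^{-1}\log(2C_1)\}$, use the trivial bound $\mu(B)$ on $[0,\alpha_*]$, and integrate the exponential decay on $[\alpha_*,\infty)$. A short case analysis (separating $C_1 \leq e^{2C_2}/2$ from its complement) yields $\barint_B |f-m_B|\,d\mu \leq 2(C_1+1)C_2^{-1}\exp(2C_2)$, and doubling to convert from $m_B$ to $f_B$ completes the argument.

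The main subtlety is confirming the existence of a finite two-sided median, which rests squarely on the hypothesis $|f|<\infty$ $\mu$-almost everywhere; beyond that, the argument is routine bookkeeping with exponential tails, with the only care required in checking that the somewhat crude form $4(C_1+1)C_2^{-1}\exp(2C_2)$ does dominate the sharper but less tidy quantity $2C_2^{-1}(\log(2C_1)+1) + O(C_2^{-1})$ that actually falls out of the layer-cake calculation.
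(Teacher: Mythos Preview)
Your argument is correct and follows the same overall strategy as the paper---choose a center for $f$ on each ball, derive exponential two-sided tail decay, integrate via layer-cake, and pass from the center to $f_B$ at the cost of a factor~$2$---but the mechanism for producing the center and the tail bound differs.

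The paper does not use the median. Instead it invokes an auxiliary result (Lemma~\ref{lemma:Gotoh}, quoted from Gotoh) applied to the distribution function $\lambda(t)=\mu(\{f\leq t\}\cap B)/\mu(B)$: under the hypothesis, that lemma produces a point $t_0$ for which the two-sided tail bound
\[
\mu(\{x\in B:|f(x)-t_0|\geq t\})\leq 2(C_1+1)\exp(2C_2)\,\mu(B)\,e^{-C_2 t}
\]
holds for \emph{every} $t\geq 0$, so the layer-cake integral is evaluated in one stroke with no threshold and no case split. Your median approach trades that black-box lemma for an explicit choice of center, at the price of only obtaining the tail estimate above $\alpha>C_2^{-1}\log(2C_1)$ and then having to do the bookkeeping you describe to recover the stated constant. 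Both routes land on $4(C_1+1)C_2^{-1}\exp(2C_2)$; yours is more self-contained, while the paper's is cleaner once Lemma~\ref{lemma:Gotoh} is in hand.
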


In the proof of Lemma~\ref{lemma:JN2conv} we apply the following lemma which can
be found in \cite[Lemma 4.5]{Gotoh01}.

\begin{lemma} \label{lemma:Gotoh}
  Let $\lambda\colon\R\to[0,1]$ be a non-constant, non-decreasing
  function. Assume that there exists positive constants $C_1,\,C_2$
  such that
\[
\min\{\lambda(s),1-\lambda(t)\} \leq C_1\exp(-C_2(t-s))
\]
for every $-\infty<s\leq t<\infty$. Then there exists $t_0\in\R$ such that
\[
\max\{\lambda(t_0-t),1-\lambda(t_0+t)\} \leq (C_1+1)\exp(2C_2)\exp(-C_2t)
\]
for each $t\geq 0$.
\end{lemma}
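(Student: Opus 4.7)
The plan is to locate a suitable ``median'' point $t_0$ of $\lambda$ and then apply the min--hypothesis at two pairs of points, shifted by unit amounts, in order to extract a two-sided max--bound.

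First I would establish that $\lim_{s\to-\infty}\lambda(s)=0$ and $\lim_{t\to+\infty}\lambda(t)=1$. Since $\lambda$ is non-constant and non-decreasing, there is some $s_0$ with $\lambda(s_0)=\delta>0$. Then for any $t\ge s_0$ for which $C_1 e^{-C_2(t-s_0)}<\delta$, the hypothesis forces $1-\lambda(t)\le C_1 e^{-C_2(t-s_0)}$; letting $t\to\infty$ gives $\lambda(t)\to 1$. The other limit is symmetric.

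Next I would choose $t_0$ in the transition region of $\lambda$, for instance $t_0:=\inf\{s\in\R:\lambda(s)\ge\tfrac12\}$, which by the previous step is a finite real number; then $\lambda(s)<\tfrac12$ for every $s<t_0$ and $\lambda(s)\ge\tfrac12$ for every $s>t_0$. For a fixed $t\ge 0$, the bound on $\lambda(t_0-t)$ would come from applying the hypothesis with arguments $(t_0-t,\,t_0+1)$, yielding
\[
\min\bigl\{\lambda(t_0-t),\,1-\lambda(t_0+1)\bigr\}\le C_1 e^{-C_2(t+1)},
\]
and symmetrically the bound on $1-\lambda(t_0+t)$ would come from the arguments $(t_0-1,\,t_0+t)$. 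In the favorable regime the ``other'' term in each min is not too small, so the desired term inherits the decay $C_1 e^{-C_2(t+1)}=(C_1 e^{-C_2})e^{-C_2 t}$, well within the advertised bound.

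The main obstacle is exactly this step of converting a min--inequality into a max--inequality: one must rule out that the bound is ``wasted'' on the wrong term of each min. My planned resolution is a dichotomy on $t$. When $t$ is large enough that $C_1 e^{-C_2(t+1)}$ is less than both $\lambda(t_0-1)$ and $1-\lambda(t_0+1)$, the decay passes through directly to $\lambda(t_0-t)$ and $1-\lambda(t_0+t)$; in the complementary regime the right-hand side $(C_1+1)e^{2C_2}e^{-C_2 t}$ already exceeds $1$, and the conclusion is trivial from the fact that both quantities lie in $[0,1]$. The prefactor $e^{2C_2}$ records the cost of the two unit shifts built into the choice $t_0\pm 1$, and the $+1$ in $(C_1+1)$ absorbs the boundary between the two regimes. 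A remaining subtlety is that if $\lambda$ has a jump at $t_0$ so large that $1-\lambda(t_0+1)$ (or $\lambda(t_0-1)$) is itself tiny, one should slightly re-center $t_0$ within the transition region, or replace the unit shift by one tuned to the location of the jump; neither modification affects the order of the final constants.
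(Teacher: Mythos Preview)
The paper does not actually prove this lemma; it merely quotes it from \cite[Lemma~4.5]{Gotoh01}. So there is no ``paper's own proof'' to compare against, and your proposal must stand on its own.

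Your outline has the right shape, but the dichotomy you propose does not close. The ``complementary regime'' is the set of $t\ge 0$ with $C_1e^{-C_2(t+1)}\ge \min\{\lambda(t_0-1),\,1-\lambda(t_0+1)\}$, and you claim that on this set $(C_1+1)e^{2C_2}e^{-C_2 t}\ge 1$. That is simply false: if $\lambda(t_0-1)$ (or $1-\lambda(t_0+1)$) happens to be, say, $10^{-100}$, then the complementary regime contains arbitrarily large $t$, while $(C_1+1)e^{2C_2}e^{-C_2 t}\to 0$. This is not a pathology tied to a jump at $t_0$; it occurs whenever one tail of $\lambda$ is already very small one unit away from $t_0$. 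Your suggested remedies (``re-center $t_0$'', ``tune the shift'') are not worked out, and it is not clear they can be made to yield the stated constants: for instance, if $\lambda$ sits at $1/2$ on a long plateau, no re-centering produces values bounded away from $0$ and $1$ at $t_0\pm 1$.

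The gap is easy to repair once you stop crossing $t_0$. With your same $t_0=\inf\{s:\lambda(s)\ge\tfrac12\}$ one has $\lambda(t_0-\epsilon)<\tfrac12<1-\lambda(t_0-\epsilon)$ and $1-\lambda(t_0+\epsilon)\le\tfrac12\le\lambda(t_0+\epsilon)$ for every $\epsilon>0$. For $t>0$ apply the hypothesis at the pair $(t_0-t,\,t_0-\epsilon)$: since $\lambda(t_0-t)<\tfrac12<1-\lambda(t_0-\epsilon)$, the minimum is $\lambda(t_0-t)$, and letting $\epsilon\downarrow 0$ gives $\lambda(t_0-t)\le C_1e^{-C_2 t}$. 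Symmetrically, applying the hypothesis at $(t_0+\epsilon,\,t_0+t)$ and using $1-\lambda(t_0+t)\le\tfrac12\le\lambda(t_0+\epsilon)$ yields $1-\lambda(t_0+t)\le C_1e^{-C_2 t}$. This already gives a constant better than the stated $(C_1+1)e^{2C_2}$. The point is that by keeping both arguments on the \emph{same} side of $t_0$ you can identify the minimum unambiguously, which is exactly what your crossing pairs $(t_0-t,t_0+1)$ and $(t_0-1,t_0+t)$ fail to do.
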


\begin{proof}[Proof of Lemma~~\ref{lemma:JN2conv}]
  We apply Lemma~\ref{lemma:Gotoh} by setting 
\[
\lambda(t) = \frac{\mu(\{x\in
  B\colon f(x)\leq t\})}{\mu(B)}.
\]
Then by the hypothesis $\lambda(t)$ meets the assumption in
Lemma~\ref{lemma:Gotoh} with the same constants $C_1$ and $C_2$. Hence
there exists $t_0\in\R$ such that the second inequality of
Lemma~\ref{lemma:Gotoh} is valid for every $t\geq 0$. This implies that
\[
\nu(t)= \mu(\{x\in B\colon |f(x)-t_0|\geq t\}) \leq
2(C_1+1)\mu(B)\exp(2C_2)\exp(-C_{2}t)
\]
for every $t\geq 0$. We obtain
\begin{align*}
\int_B|f-f_B|\, d\mu & \leq 2\int_B|f-t_0|\, d\mu =
2\int_0^\infty\nu(t)\, dt \\
& \leq 4(C_1+1)C_2^{-1}\exp(2C_2)\mu(B)
\end{align*}
from which the claim follows.
\end{proof}

In Euclidean spaces the following lemma is due to Str\"omberg
\cite{Str}. A result similar to this has also been considered for
nondoubling measures by Lerner in \cite{Lerner}.

\begin{lemma} \label{lemma:Stromberg} Let $f\colon X\to\R$ be
  $\mu$-measurable. Assume that there exist constants
  $0<\gamma<(4c_D^{3})^{-1}$, and $\lambda>0$ such that for each ball
  $B$ in $X$ we have
\begin{equation}\label{lambdagamma}
\inf_{c\in\R}\mu(\{x\in B\colon |f(x)-c|\geq \lambda\}) \leq \gamma\mu(B).
\end{equation}
Then $f\in\BMO(X)$ satisfying $\|f\|_* \leq C\lambda$, where a
positive constant $C$ depends only on the doubling constant $c_D$.
\end{lemma}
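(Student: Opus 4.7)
The plan is to establish an exponential level-set decay estimate for $f$ of the form required by Lemma \ref{lemma:JN2conv}, and then invoke that lemma. For each ball $B\subset X$ fix $c_B\in\R$ satisfying \eqref{lambdagamma}. The key claim, which I would prove by induction on $k\geq 0$, is
\[
\mu(\{x\in B : f(x)-c_B\geq (2k+1)\lambda\})\leq \gamma(4c_D^3\gamma)^k\mu(B)
\]
for every ball $B$, together with the symmetric bound for the lower tail $\{f-c_B\leq -(2k+1)\lambda\}$. The base case $k=0$ is precisely \eqref{lambdagamma}, and since $4c_D^3\gamma<1$ by hypothesis the right-hand side is exponentially decaying in $k$.

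For the inductive step I would run a Calder\'on--Zygmund-type stopping-time argument on the one-sided level set $F:=\{x\in B: f(x)-c_B>(2k+1)\lambda\}$. For $\mu$-a.e.\ $x\in F$ the Lebesgue density of $F$ at $x$ equals $1$ (standard in doubling metric measure spaces; see \cite[Section 3]{Bjornsbook}), while on balls $B(x,r)$ with $r$ comparable to $\rad(B)$ or larger, the density of $F$ is controlled by $c_D^3\mu(F)/\mu(B)\leq c_D^3\gamma<1-\gamma$ thanks to $\gamma<(4c_D^3)^{-1}$. Hence there is a largest critical radius $r(x)\leq C\rad(B)$ with $\mu(F\cap B(x,r(x)))/\mu(B(x,r(x)))\geq 1-\gamma$. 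Applying the $5$-covering lemma \cite[Lemma 1.7]{Bjornsbook}, I extract a pairwise disjoint subfamily $\{B_i\}=\{B(x_i,r(x_i))\}$ whose dilates $\{5B_i\}$ cover $F$ up to a $\mu$-null set and remain in a fixed enlargement of $B$. On each $5B_i$ the sets $F\cap 5B_i$ and $\{|f-c_{5B_i}|<\lambda\}\cap 5B_i$ have relative measures at least $c_D^{-3}(1-\gamma)$ and $1-\gamma$ respectively, and these sum to more than $1$ again by $\gamma<(4c_D^3)^{-1}$; selecting a point in their intersection forces $c_{5B_i}>c_B+2k\lambda$.

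Consequently $\{f-c_B\geq (2k+3)\lambda\}\cap 5B_i\subset \{f-c_{5B_i}\geq \lambda\}\cap 5B_i$, which has measure at most $\gamma\mu(5B_i)\leq c_D^3\gamma\mu(B_i)$. Summing using disjointness of $\{B_i\}$ and the density bound $\mu(B_i)\leq (1-\gamma)^{-1}\mu(F\cap B_i)$,
\[
\mu(\{f-c_B\geq (2k+3)\lambda\}\cap B)\leq \frac{c_D^3\gamma}{1-\gamma}\mu(F)\leq 4c_D^3\gamma\cdot\gamma(4c_D^3\gamma)^k\mu(B),
\]
closing the induction. The key claim then supplies the exponential decay hypothesis of Lemma \ref{lemma:JN2conv} with $C_1=C(c_D)$ and $C_2=c|\log(4c_D^3\gamma)|/\lambda$, and that lemma delivers $\|f\|_*\leq C\lambda$ with $C$ depending only on $c_D$.

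The main obstacle is the metric-space stopping-time construction: one must ensure the critical radii $r(x)$ stay bounded by a controlled multiple of $\rad(B)$, and must track carefully the doubling factor that appears when passing from $B_i$ to $5B_i$. That factor is exactly what forces the quantitative threshold $\gamma<(4c_D^3)^{-1}$ in the hypothesis, so the constants in the iteration must be calibrated so that the iteration factor stays strictly less than $1$.
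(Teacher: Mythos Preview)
Your approach is essentially the paper's: both run a Calder\'on--Zygmund stopping-time argument on the one-sided level sets to obtain geometric decay of the form $\mu(\{f-c_B>(2k+1)\lambda\}\cap B)\lesssim (c_D^3\gamma)^k\mu(B)$, then sum. The paper's version differs only cosmetically---it stops at density $\tfrac12$ rather than your $1-\gamma$, and applies \eqref{lambdagamma} on the covering balls $B_i$ themselves rather than on their dilates $5B_i$---but the mechanism and constants are the same.

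The one misstep is your final appeal to Lemma~\ref{lemma:JN2conv}. That lemma's conclusion carries a factor $\exp(2C_2)$, and with $C_2\sim |\log(4c_D^3\gamma)|/\lambda$ this blows up as $\lambda\to 0$, so it does not deliver $\|f\|_*\le C\lambda$ with $C$ depending only on $c_D$. The paper instead integrates the level-set decay directly:
\[
\barint_B|f-c_B|\,d\mu\le \lambda+2\sum_{m\ge 1}(m+1)\lambda\,\frac{\mu(S_m)}{\mu(B)}\le \lambda\Bigl(1+2\sum_{m\ge1}(m+1)(2c_D^3\gamma)^{m/2}\Bigr)\le C\lambda,
\]
using that $2c_D^3\gamma<\tfrac12$ keeps the ratio bounded away from $1$. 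Once you have your decay estimate this is a one-line computation; replace the reference to Lemma~\ref{lemma:JN2conv} by this summation and your argument is complete.
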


\begin{proof}
  Let $f$ be $\mu$-measurable on $X$, and fix $\gamma$ and $\lambda$
  such that the hypothesis \eqref{lambdagamma} is satisfied for each
  ball in $X$. Fix a ball $B\subset X$ and let $c_{0}$ be the number
  where the infimum in \eqref{lambdagamma} is reached.  For each
  $m=1,2,\ldots$ we write
\begin{align*}
S_{m}^{+} & =  \{x\in B\colon f(x)-c_{0}> m\lambda\}, \\
S_{m}^{-} & =  \{x\in B\colon f(x)-c_{0}< -m\lambda\}, \\
S_m & = S_{m}^{+}\cup S_{m}^{-}=\{x\in B\colon |f(x)-c_{0}|> m\lambda\}, \\
E_m & = \{x\in B\colon m\lambda <|f(x)-c_{0}|\leq (m+1)\lambda\},
\end{align*}
and 
\[
E_0 = \{x\in B\colon |f(x)-c_{0}|\leq \lambda\}.
\] 
Let us estimate the measure of the set $S_{m}^{+}$. First notice that
$S_{m}^{+}\subset S_{m-1}^{+}$.  For $\mu$-almost every $x\in
S_{m-1}^{+}$, there exists a ball $B_{x}=B(x,r_{x})$ such that
\begin{equation}\label{eqn:half}
\frac{1}{2c_{D}}\mu(B_{x})<\mu(B_{x}\cap S_{m-1}^{+})\leq \frac 12\mu(B_{x})
\end{equation}
and 
\[
\mu(B(x,r)\cap S_{m-1}^{+})>\frac12\mu(B(x,r))
\]
for all $r<\tfrac12r_{x}$; see, for example, Theorem 3.1 and Remark 3.2
in \cite{KKST}.

By a well known 5-covering theorem (\cite[Lemma 1.7]{Bjornsbook}), we
can cover the set $S_{m-1}^{+}$ by finite or countable sequence of
balls $\{B_{i}\}_{i}$ satisfying \eqref{eqn:half} such that the balls
$\{\tfrac15 B_{i}\}_{i}$ are disjoint. It follows from
\eqref{eqn:half} that the infimum in \eqref{lambdagamma} is reached
with some constant $c$ such that $$c_0+(m-2)\lambda\leq c \leq c_0+m\lambda$$
in each of the balls $B_{i}$, and hence $c-c_0\leq m\lambda$.

We conclude, by applying the in inequality \eqref{lambdagamma} in
balls $B_{i}$, that
\[
\begin{split}
  \mu(S_{m+1}^{+}) & \leq \sum_{i}\mu(B_i\cap S_{m+1}^{+})
  \leq\gamma\sum_{i}\mu(B_{i}) \leq c_{D}^{3}\gamma\sum_{i}\mu(\tfrac15 B_{i}) \\
  & \leq 2c_{D}^{3}\gamma\sum_{i}\mu(\tfrac15 B_{i}\cap S_{m-1}^{+})
  \leq 2c_{D}^{3}\gamma\mu(S_{m-1}^{+})
\end{split}
\]
Since $\mu(S_{1}^{+})\leq \mu(S_1)<\gamma\mu(B)$, it follows from the
previous estimate that
\[
\mu(S_{2m+2}^{+})\leq \mu(S_{2m+1}^{+}) \leq (2c_{D}^{3}\gamma)^{m+1}\mu(B)
\]
for each $m=1,2,\ldots$. Since a similar estimate holds for
$S_{m}^{-}$, we altogether have
\[
\mu(S_{m})\leq 2(2c_{D}^{3}\gamma)^{m/2}\mu(B).
\]
We thus conclude
  \begin{align*}
    \barint_{B}|f-f_B|\, d\mu & \leq
    \frac2{\mu(B)}\left(\sum_{m=0}^\infty\int_{E_m}|f-c_{0}|\, d\mu\right) \\
    & \leq \lambda +
    2\sum_{m=1}^\infty(m+1)\lambda\frac{\mu(S_m)}{\mu(B)} \\
    & \leq
    \lambda\left(1+2\sum_{m=1}^\infty(m+1)(2c_{D}^{3}\gamma)^{m/2}\right) \\
    & \leq
    \lambda\left(1+2\sum_{m=1}^\infty(m+1)2^{-m/2}\right).
\end{align*}
Since the preceding estimate holds for any ball $B\subset X$, the
claim follows.
\end{proof}

Let us now turn to the proof of Theorem~\ref{thm:Gotoh}.

\begin{proposition}\label{prop:normbound}[(i) $\Rightarrow$ (iii)] \label{prop:i-iii} Let
  $F\colon X\to X$ be $\mu$-measurable and assume that there exist
  positive finite constants $K$ and $\alpha$ such that the condition
  (i) of Theorem~\ref{thm:Gotoh} holds. Then $F$ is a $\BMO$-map
  satisfying $\|C_F\|\leq CK/\alpha$, where $C$ depends on the
  doubling constant $c_D$.
\end{proposition}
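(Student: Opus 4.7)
The plan is to use Lemma~\ref{lemma:JN2conv} as a bridge: if we can show that $f\circ F$ has good John--Nirenberg type decay on every ball, then it lies in $\BMO(X)$ with a quantitative norm bound. The input providing this decay will be Lemma~\ref{lemma:JN2} applied to $f$, transported through $F$ by hypothesis~(i).

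First I would make a couple of preliminary observations. Taking $E_1=E_2=E$ with $\mu(E)=0$ in \eqref{eq:i} makes the right-hand side vanish, which forces $\mu(F^{-1}(E)\cap B)=0$ for every ball $B$ and hence $\mu(F^{-1}(E))=0$; so condition~(I) from the definition of $\BMO$-map is automatic, and in particular $f\circ F$ is well-defined and $\mu$-measurable whenever $f$ is, and is finite $\mu$-a.e. Next, since $\|f\circ F\|_*$ and $\|f\|_*$ are both positively homogeneous in $f$, I may assume $\|f\|_*=1$, and in view of Remark~\ref{rmk:conditions}(2) I may assume $\alpha\le 1$.

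Fix a ball $B\subset X$ and reals $s\le t$. Set
\[
E_1=\{y\in X\colon f(y)\ge t\},\qquad E_2=\{y\in X\colon f(y)\le s\}.
\]
Lemma~\ref{lemma:JN2} applied to $f$ on the ball $B$ yields
\[
\min_{k=1,2}\frac{\mu(E_k\cap B)}{\mu(B)}\le 2\exp\bigl(-C(t-s)\bigr),
\]
with $C$ depending only on $c_D$; taking the supremum over balls gives the same bound for the quantity that appears on the right-hand side of \eqref{eq:i}. Hypothesis~(i) then gives
\[
\frac{\min_{k=1,2}\mu(F^{-1}(E_k)\cap B)}{\mu(B)}\le K\bigl(2\exp(-C(t-s))\bigr)^{\alpha}\le 2K\exp\bigl(-C\alpha(t-s)\bigr).
\]
Since $F^{-1}(E_1)\cap B=\{x\in B\colon (f\circ F)(x)\ge t\}$ and similarly for $E_2$, this is exactly the hypothesis of Lemma~\ref{lemma:JN2conv} for $f\circ F$, with constants $C_1=2K$ and $C_2=C\alpha$.

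Lemma~\ref{lemma:JN2conv} now delivers
\[
\|f\circ F\|_*\le 4(C_1+1)C_2^{-1}\exp(2C_2)\le \frac{4(2K+1)}{C\alpha}\exp(2C\alpha).
\]
Because $\alpha\le 1$, the factor $\exp(2C\alpha)$ is an absolute constant depending only on $c_D$, and because $K\ge 1$ (Remark~\ref{rmk:conditions}(1)) we have $2K+1\le 3K$, so the right-hand side is bounded by $C'K/\alpha$ with $C'$ depending only on $c_D$. Undoing the normalization $\|f\|_*=1$ by homogeneity yields $\|f\circ F\|_*\le (C'K/\alpha)\|f\|_*$ for every $f\in\BMO(X)$, which is the desired operator norm bound. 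The main subtle point worth flagging is the reduction to $\|f\|_*=1$: without it, the exponential $\exp(2C_2)=\exp(2C\alpha/\|f\|_*)$ coming from Lemma~\ref{lemma:JN2conv} would blow up as $\|f\|_*\downarrow 0$; the scaling invariance of $\BMO$ is what keeps this factor under control.
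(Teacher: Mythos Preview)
Your proof is correct and follows essentially the same route as the paper: feed the level sets of $f$ into Lemma~\ref{lemma:JN2}, transport the resulting decay through condition~(i), and then invoke Lemma~\ref{lemma:JN2conv} to bound $\|f\circ F\|_*$. The only cosmetic difference is that you normalize $\|f\|_*=1$ at the start (and then bound $\exp(2C\alpha)$ using $\alpha\le 1$), whereas the paper keeps $\|f\|_*$ general, obtains the bound with the factor $\exp(2C\alpha/\|f\|_*)$, and then eliminates that factor by applying the estimate to $\tau f$ and letting $\tau\to\infty$; these are equivalent ways of handling the same issue you flagged.
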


\begin{proof}
  The condition (i) implies that if $E$ is a $\mu$-null subset of $X$
  then also $\mu(F^{-1}(E))= 0$.

  Let $f\in\BMO(X)$ and set for each $-\infty<s\leq t<\infty$
\begin{equation} \label{eq:sets}
E_1 = \{x\in X\colon f(x)\leq s\} \quad\textrm{and}\quad E_2 = \{x\in X\colon f(x)\geq
t\}.
\end{equation}
It follows from Lemma~\ref{lemma:JN2} that
\[
\min\{\mu(E_1\cap B),\,\mu(E_2\cap B)\} \leq
2\mu(B)\exp\left(-C\frac{t-s}{\|f\|_*}\right)
\]
for all balls $B$ in $X$. The condition (i) implies
\[
\min\{\mu(F^{-1}(E_1)\cap B),\,\mu(F^{-1}(E_2)\cap B)\} \leq
2^\alpha K\mu(B)\exp\left(-C\frac{\alpha(t-s)}{\|f\|_*}\right)
\]
for all balls $B$ in $X$. Since
\[
  F^{-1}(E_1)\cap B = \{x\in B\colon (f\circ F)(x)\leq s\}
\]
and 
\[
F^{-1}(E_2)\cap B = \{x\in B\colon (f\circ F)(x)\geq t\},
\]
it follows from Lemma~\ref{lemma:JN2conv} that $f\circ F\in \BMO(X)$
and (recall that $\alpha\leq 1$, see Remark~\ref{rmk:conditions})
\begin{align*}
  \|C_F(f)\|_* & \leq
  \frac{4(2^{\alpha}K+1)\|f\|_*}{C\alpha}\exp(2C\alpha/\|f\|_*) \\
  & =\frac{CK\|f\|_*}{\alpha}\exp(C\alpha/\|f\|_*),
\end{align*}
where $C$ is a positive constant depending on the doubling constant
$c_D$. Applying the preceding estimate to $\tau f$, $\tau>0$, and
letting $\tau\to\infty$, we obtain that $\|C_F\|\leq CK/\alpha$.
\end{proof}

\begin{proposition}[(ii) $\Rightarrow$ (iii)] \label{prop:ii-iii} Let
  $F\colon X\to X$ be $\mu$-measurable and assume that there exist
  constants $0<\gamma< (4c_D^3)^{-1}$ and $\lambda>0$ such that the
  condition (ii) of Theorem~\ref{thm:Gotoh} holds. Then $F$ is a
  $\BMO$-map satisfying $\|C_F\|\leq C\lambda$, where $C$ depends on
  the doubling constant $c_D$ and $\gamma$.
\end{proposition}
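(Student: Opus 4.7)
The plan is to chain three ingredients in sequence: the two-sided John--Nirenberg estimate (Lemma~\ref{lemma:JN2}) applied to $f\in\BMO(X)$, hypothesis~(ii) to transfer level-set density bounds from $f$ to $g:=f\circ F$, and Str\"omberg's criterion (Lemma~\ref{lemma:Stromberg}) to convert the resulting single-scale density bound for $g$ back into a $\BMO$ estimate. The null-preimage property~(I) will be read off from the degenerate case $E_1=E_2=E$ of (ii).

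For (I), let $E\subset X$ be $\mu$-null. Setting $E_1=E_2=E$ in (ii) we have $\sup_B\mu(E\cap B)/\mu(B)=0<\lambda$ trivially, so (ii) gives $\mu(F^{-1}(E)\cap B)<\gamma\,\mu(B)<\mu(B)$ for every ball $B$. Hence no point of $X$ can be a density-$1$ point of $F^{-1}(E)$; by the Lebesgue differentiation theorem on doubling spaces $\mu$-almost every point of a measurable set is such a density-$1$ point, so $\mu(F^{-1}(E))=0$.

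For the operator bound, fix $f\in\BMO(X)$ with $\|f\|_*>0$ and write $g=f\circ F$. Let $A$ be the John--Nirenberg constant from Lemma~\ref{lemma:JN2} and set $M_0=A^{-1}\|f\|_*\log(2/\lambda)$. For any $s\le t$ with $t-s>M_0$, Lemma~\ref{lemma:JN2} applied to $E_1=\{f\le s\}$ and $E_2=\{f\ge t\}$ gives
\[
\sup_B\frac{1}{\mu(B)}\min\{\mu(E_1\cap B),\mu(E_2\cap B)\}\le 2\exp\lp -A(t-s)/\|f\|_*\rp<\lambda,
\]
so that hypothesis~(ii) yields, for every ball $B$,
\[
\min\{\mu(\{g\le s\}\cap B),\mu(\{g\ge t\}\cap B)\}<\gamma\,\mu(B).
\]

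Finally, fix a ball $B$ and let $c_B$ be a median of $g$ on $B$, which exists since $\gamma<1/2$. For $M>M_0$, applying the previous inequality to the pairs $(c_B-M-\varepsilon,c_B)$ and $(c_B,c_B+M+\varepsilon)$ and letting $\varepsilon\downarrow 0$ produces $\mu(\{g<c_B-M\}\cap B)\le\gamma\mu(B)$ and $\mu(\{g>c_B+M\}\cap B)\le\gamma\mu(B)$, so that
\[
\mu(\{x\in B:|g(x)-c_B|>M\})\le 2\gamma\,\mu(B).
\]
This is exactly the hypothesis of Lemma~\ref{lemma:Stromberg} but with density constant $2\gamma$ in place of $\gamma$. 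The geometric iteration inside the proof of Lemma~\ref{lemma:Stromberg} converges precisely when $2c_D^3(2\gamma)<1$, i.e. when $\gamma<(4c_D^3)^{-1}$, which is the standing assumption of the proposition; consequently Lemma~\ref{lemma:Stromberg} delivers $\|g\|_*\le C(c_D,\gamma)\,M$. Letting $M\downarrow M_0$ yields $\|C_F(f)\|_*\le C(c_D,\gamma)\,\|f\|_*\log(2/\lambda)$, and hence the desired operator-norm bound. The main technical delicacy is the factor-of-two loss when combining the two one-sided tail estimates into the symmetric form $|g-c_B|>M$ required by Str\"omberg; it is exactly this loss that pins down the sharp threshold $\gamma<(4c_D^3)^{-1}$ under which the argument goes through.
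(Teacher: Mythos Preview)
Your proof is correct and follows essentially the same route as the paper: Lemma~\ref{lemma:JN2} on $f$, then hypothesis~(ii) to transfer the level-set bound to $g=f\circ F$, then Lemma~\ref{lemma:Stromberg} with density parameter $2\gamma$ to conclude. The only cosmetic difference is your choice of $c_B$ as a median of $g$ on $B$ (the remark ``which exists since $\gamma<1/2$'' is slightly misplaced---medians exist whenever $|g|<\infty$ a.e., which follows from part~(I); the bound $\gamma<1/2$ is what makes the median argument \emph{work}), whereas the paper takes $c_B=s_B+C_1/2$ with $s_B=\sup\{s:\mu(\{g\le s\}\cap B)\le\mu(\{g\ge s+C_1\}\cap B)\}$; both choices yield the same $2\gamma$ bound and the same final estimate.
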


\begin{proof}
  The condition (ii) implies that if $E$ is a $\mu$-null subset of $X$
  then also $\mu(F^{-1}(E))= 0$.

  Let $f\in\BMO(X)$ and assume, without loss of generality, that
  $\|f\|_*=1$. We define the sets $E_1$ and $E_2$ for each
  $-\infty<s<t<\infty$ as in \eqref{eq:sets}. We apply
  Lemma~\ref{lemma:JN2} and obtain
\[
\sup_{B}\min_{k=1,2}\frac{\mu(E_k\cap B)}{\mu(B)} \leq
2\exp\left(-C(t-s)\right) <\lambda,
\]
whenever $t-s\geq C_1$, where $C_1$ only depends on $\lambda$ and the
constant $C$ from Lemma~\ref{lemma:JN2}. Hence the condition (ii)
implies that
\[
\sup_{B}\min_{k=1,2}\frac{\mu(F^{-1}(E_k)\cap B)}{\mu(B)}<\gamma.
\]
For every ball $B$ in $X$ we set
\begin{align*}
s_B = \sup & \left\{s\in\R:\, \mu(\{x\in B\colon f(F(x))\leq s\}) \right. \\
& \qquad \qquad \left. \leq
  \mu(\{x\in B\colon f(F(x))\geq s+C_1\})\right\}.
\end{align*}
Since $|f(F(x))|<\infty$ for $\mu$-almost every $x\in X$,
we have that $s_B\neq \pm\infty$. Hence
\[
\mu(\{x\in B\colon f(F(x))\leq s_B-1\}) < \gamma\mu(B)
\]
and
\[
\mu(\{x\in B\colon f(F(x))\geq s_B+C_1+1\}) < \gamma\mu(B).
\]
If we set $c_B = s_B+C_1/2$ and $\tau=1+C_1/2$, we obtain
\[
\mu(\{x\in B\colon |f(F(x))-c_B| \geq \tau\}) \leq 2\gamma\mu(B).
\]
The claim follows from Lemma~\ref{lemma:Stromberg}.
\end{proof}

We shall apply the Uchiyama construction in the proof of the following
result.

\begin{proposition}[(iii) $\Rightarrow$ (i)] \label{prop:iii-i} Let
  $F\colon X\to X$ be a $\BMO$-map. Then there exist positive
  constants $K$ and $\beta$, depending only on the doubling constant
  $c_D$, such that the condition (i) of Theorem~\ref{thm:Gotoh} holds
  with $\alpha=\beta/\|C_F\|$.
\end{proposition}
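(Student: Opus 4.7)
The strategy is to apply the two directions of Theorem~\ref{theorem:construction} (with $N=2$) as a bridge: the forward direction manufactures BMO test functions from measurable sets downstairs, and the converse direction extracts measure estimates from BMO test functions upstairs. Fix measurable sets $E_1,E_2\subset X$ and write
\[
\eta := \sup_{B}\min_{k=1,2}\frac{\mu(E_k\cap B)}{\mu(B)}.
\]
If $\eta$ is bounded below by a constant depending only on $c_D$ and $\|C_F\|$, then the desired inequality holds trivially because the left-hand side of \eqref{eq:i} is at most $1$; I would dispose of this case first by choosing $K$ large enough. So assume $\eta$ is small and define $\lambda>1$ by $c_D^{-4\lambda}=\eta$, i.e.\ $\lambda=-\tfrac{1}{4}\log_{c_D}\eta$.

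By the sufficiency direction of Theorem~\ref{theorem:construction} applied to $E_1,E_2$ at parameter $\lambda$, there exist functions $f_1,f_2$ with $f_1+f_2=1$, $0\le f_j\le 1$, $f_j=0$ $\mu$-a.e.\ on $E_j$, and $\|f_j\|_*\le c_1/\lambda$. Now set $g_j=f_j\circ F$. Since $F$ is a BMO-map, $g_j\in\BMO(X)$ with
\[
\|g_j\|_*\le \|C_F\|\,\|f_j\|_*\le \frac{c_1\|C_F\|}{\lambda}.
\]
The pointwise identities $g_1+g_2=1$ and $0\le g_j\le 1$ transfer directly. Property (I) of a BMO-map ensures $g_j=0$ $\mu$-a.e.\ on $F^{-1}(E_j)$: indeed, $f_j=0$ off a $\mu$-null set $N_j\subset E_j$, and $F^{-1}(N_j)$ is $\mu$-null, so $g_j=0$ $\mu$-a.e.\ on $F^{-1}(E_j)$.

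Next, choose $\lambda':=c_2\lambda/(c_1\|C_F\|)$, where $c_2$ is the small constant furnished by the necessity part of Theorem~\ref{theorem:construction} (with $N=2$). The trivial regime handled at the start guarantees $\lambda'>1$. By construction $\|g_j\|_*\le c_2/\lambda'$, so the necessity direction of Theorem~\ref{theorem:construction}, applied to the pair $F^{-1}(E_1), F^{-1}(E_2)$ with the functions $\{g_j\}$, yields for every ball $B$
\[
\min_{k=1,2}\frac{\mu(F^{-1}(E_k)\cap B)}{\mu(B)}\le c_D^{-4\lambda'}
=\bigl(c_D^{-4\lambda}\bigr)^{c_2/(c_1\|C_F\|)}=\eta^{\,\beta/\|C_F\|},
\]
where $\beta:=c_2/c_1$ depends only on $c_D$. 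Taking the supremum over $B$ on the left gives \eqref{eq:i} with $\alpha=\beta/\|C_F\|$ and with $K$ the constant absorbed in the trivial regime.

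The only real work is lining up the two applications of Theorem~\ref{theorem:construction} so that the $\lambda$ used downstairs and the $\lambda'$ used upstairs are consistent; the main obstacle is bookkeeping—verifying that $\lambda'>1$ precisely when the nontrivial regime is in force and that the composition preserves the four defining properties of the Uchiyama functions. The vanishing property $g_j=0$ $\mu$-a.e.\ on $F^{-1}(E_j)$ is where hypothesis (I) enters crucially; without it one could only say $g_j=0$ on $F^{-1}(\{f_j=0\})$, which may fail to cover $F^{-1}(E_j)$ up to a null set.
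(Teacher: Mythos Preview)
Your argument is correct and essentially the same as the paper's: both construct the $f_j$ via the sufficiency direction of Theorem~\ref{theorem:construction}, compose with $F$, and then extract the measure estimate for $F^{-1}(E_k)$. The only difference is packaging---you invoke the necessity direction of Theorem~\ref{theorem:construction} as a black box, whereas the paper applies the John--Nirenberg lemma (Lemma~\ref{lemma:jn}) directly to the $g_j$, which is precisely how that necessity direction is proved.
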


\begin{proof}
  Let $E_1$ and $E_2$ be $\mu$-measurable subsets in $X$ and let 
  $\lambda>0$ be such that
\[
c_D^{-4\lambda} = \sup_{B}\min_{k=1,2}\frac{\mu(E_k\cap B)}{\mu(B)}.
\]
By Theorem~\ref{theorem:construction} there exist the functions $f_1$
and $f_2$, both in $\BMO(X)$, such that $f_1+f_2=1$, $0\leq f_k\leq
1$, $f_k=0$ on $E_k$, and $\|f_k\|_*\leq C_1/\lambda$ for $k=1,2$,
where a positive constant $C_1$ depends on the doubling constant
$c_D$. Define for $k=1,2$ the composed function
$g_k = f_k\circ F$.
Then $g_1+g_2=1$, $0\leq g_k\leq 1$, $g_k=0$ on $F^{-1}(E_k)$, and
$\|g_k\|_*\leq C_1\|C_F\|/\lambda$ for $k=1,2$.

Let us fix a ball $B$ in $X$. Clearly, we may assume that $(g_1)_B\geq
1/2$. Then by Lemma~\ref{lemma:jn} we obtain
\begin{align*}
  \frac{\mu(F^{-1}(E_1)\cap B)}{\mu(B)} & \leq \frac{\mu(\{x\in B\colon |g_1(x)-(g_1)_B|\geq
  1/2\})}{\mu(B)} \\
& \leq 2\exp(-C\lambda/\|C_F\|),
\end{align*}
where $C$ is a positive constant depending on the doubling constant
$c_D$. By plugging in the value of $\lambda$, we obtain 
\begin{align*}
\sup_{B}\min_{k=1,2}\frac{\mu(F^{-1}(E_k)\cap B)}{\mu(B)} & \leq 
2\left(\sup_{B}\min_{k=1,2}\frac{\mu(E_k\cap B)}{\mu(B)}\right)^{C/\|C_F\|}
\end{align*}
which completes the proof.
\end{proof}

\subsection{$A_p$-weights and $\BMO$-maps}

We close this paper by discussing the connection between Muckenhoupt
$A_p$-weights and $\BMO$-maps.

It is well known that if $\omega$ is an $A_p$-weight for some $1\leq
p<\infty$, then $\log\omega \in \BMO(X)$, and on the other hand,
whenever $f\in \BMO(X)$, then $e^{\delta f}$ is an $A_p$-weight for
some $\delta>0$ and $1\leq p<\infty$. We refer to \cite{GCRdF} for
this result in the Euclidean setting. It straightforward to verify
that the result has its counterpart also in metric measure spaces with
a doubling measure. 

We can add the following condition to the list in
Theorem~\ref{thm:Gotoh}:
\begin{itemize}

\item[(iv)] For each $A_p$-weight $\omega$, with some $1\leq
  p<\infty$, the composed map $\omega^\delta\circ F$ is an
  $A_{p'}$-weight for some positive $\delta$ and $1\leq p'<\infty$.

\end{itemize}

In Euclidean spaces, the condition (iv) can be stated in terms of
$A_\infty$-weights, see \cite[Corollary 3.3]{Gotoh05}, and these
weights have several but equivalent characterizations. In general
metric spaces $A_\infty$-weights have first been defined and studied
in \cite{StrTor}. In this generality, however, these different
conditions are not necessarily equivalent. In particular, the class of
$A_\infty$-weights can be strictly larger than the union of
$A_p$-weights \cite{StrTor}. Several characterizations for
$A_\infty$-weights and their relations in doubling metric measure
spaces have also been studied in \cite{KoKa}.




\noindent (J.K.): Aalto University, School of Science and Technology, Department of Mathematics, P.O. Box 11100, FI-00076 Aalto, Finland. \\ 
E-mail: {\tt juha.k.kinnunen@aalto.fi}

\medskip

\noindent (R.K.), (N.M): Department of Mathematics and Statistics, P.O. Box 68, FI-00014 University of Helsinki, Finland. \\
E-mail: {\tt riikka.korte@helsinki.fi}, {\tt niko.marola@helsinki.fi} 

\medskip

\noindent (N.S.): Department of Mathematical Sciences, P.O.Box 210025, University of Cincinnati, Cincinnati, OH 45221{0025, U.S.A. \\
E-mail:{\tt shanmun@uc.edu}


\begin{thebibliography}{AAA}

\bibitem{Ast} K. Astala:
{\it A remark on quasiconformal mappings and $\BMO$-functions},
Michigan Math. J. {\bf 30} (1983), 209--212.

\bibitem{Bjornsbook} A. Bj\"orn and J. Bj\"orn: Nonlinear Potential
  Theory on Metric Spaces, EMS Tracts in Mathematics, 17. European
  Mathematical Society (EMS), Z\"urich, 2011.

\bibitem{Buckley} S.M. Buckley: {\it Inequalities of John--Nirenberg
    type in doubling spaces}, J. Anal. Math. {\bf 79} (1999), 215--240.


\bibitem{GCRdF} J. Garc\'ia-Cuerva and J.L. Rubio de Francia:
  Weighted Norm Inequalities and Related Topics, North-Holland
Mathematics Studies, 116, North-Holland Publishing Co., Amsterdam,
1985.

\bibitem{GJ} J. B. Garnett and P. W. Jones: {\it The distance in
    $\BMO$ to $L^\infty$}, Ann. of Math. (2) {\bf 108} (1978),
  373--393.

\bibitem{GK74} 
F.W. Gehring and J.C. Kelly:
{\it Quasi-conformal mappings and Lebesgue
density}, Discontinuous groups and Riemann surfaces (Proc. Conf., Univ. Mary-
land, College Park, Md., 1973), pp. 171--179. Ann. of Math. Studies, No. 79,
Princeton Univ. Press, Princeton, N.J., 1974.

\bibitem{Gotoh01} Y. Gotoh:
{\it An extension of the Uchiyama theorem and its application to composition operators which preserve $\BMO$}, 
J. Anal. Math. {\bf 97} (2006), 133--167.

\bibitem{Gotoh05} Y. Gotoh:
{\it On composition operators which preserve $\BMO$}, 
Pacific J. Math. {\bf 201} (2001), 289--307.





\bibitem{KKST} J. Kinnunen, R. Korte, N. Shanmugalingam, and
  H. Tuominen: {\it Lebesgue points and capacities via the boxing
    inequality in metric spaces}, Indiana Univ. Math. J.  {\bf 57}
  (2008), 401--430.

\bibitem{KoKa} R. Korte and O. E. Kansanen: {\it Strong
    $A_\infty$-weights are $A_\infty$-weights on metric spaces},
  Rev. Mat. Iberoam. {\bf 27} (2011), 335--354.

\bibitem{KoMaSha12}
R. Korte, N. Marola and N. Shanmugalingam:
{\it Quasiconformality, homeomorphisms between metric measure spaces preserving quasiminimizers, and uniform density property},
Ark. Mat. {\bf 50} (2012), 111--134.

\bibitem{Kronz} M. Kronz:
  {\it Some function spaces on spaces of homogeneous type}, 
  Manuscripta Math.  {\bf 106} (2001), 219--248.

\bibitem{Lerner} A.K. Lerner:
  {\it On the John-Str\"omberg characterization of $\BMO$ for nondoubling measures}, 
  Real Anal. Exchange {\bf 28} (2002/03), 649--660.

\bibitem{Maasalo} O.E. Maasalo: {\it Global integrability of
    p-superharmonic functions on metric spaces}, J. Anal. Math. {\bf
    106} (2008), 191--207.

\bibitem{MMNO} J. Mateu, P. Mattila, A. Nicolau, and J. Orobitg: {\it
    $\BMO$ for nondoubling measures}, Duke Math. J. {\bf 102} (2000),
  533--565.


\bibitem{Rei} H.M. Reimann:
{\it Functions of bounded mean oscillation and quasiconformal mappings},
Comment. Math. Helv. {\bf 49} (1974), 260--276.


\bibitem{Str} J.-O. Str\"omberg:
  {\it Bounded mean oscillation with Orlicz norms and duality of Hardy spaces}, 
  Indiana Univ. Math. J. {\bf 28} (1979), 511--544.

\bibitem{StrTor} J.-O. Str\"omberg and A. Torchinsky: Weighted Hardy
  Spaces, Lecture Notes in Mathematics, 1381, Springer-Verlag, 1989.

\bibitem{Uchiyama82} A. Uchiyama: {\it The construction of certain $\BMO$
    functions and the corona problem}, Pacific J. Math. {\bf 99}
  (1982), 183--204.


\end{thebibliography}
\end{document}